\documentclass[a4paper,11pt]{amsart}

\usepackage[cp1250]{inputenc}
\usepackage{amsmath}                            
\usepackage{amssymb}
\usepackage{amsfonts}
\usepackage{mathrsfs}
\usepackage{cite}
\usepackage{textcomp}
\usepackage{array}
\usepackage{multirow}
\usepackage{float}
\usepackage{graphicx}
\usepackage{setspace}
\usepackage{caption}

\newcommand{\cc}{\mathbf{\mathbb{C}}}

\newcommand{\zz}{\mathbf{\mathbb{Z}}}

\newcommand{\supp}{\operatorname{supp}}

\newcommand{\Vol}{\operatorname{Vol}}

\newtheorem{tw}{\textsc{Theorem}}[section]

\newtheorem{wn}[tw]{\textsc{Corollary}}
\newtheorem{lem}[tw]{\textsc{Lemma}}

\theoremstyle{definition}
\newtheorem{df}[tw]{\textsc{Definition}}

\numberwithin{equation}{section}

\newcolumntype{M}[1]{>{\raggedright}m{#1}}

\setlength{\tabcolsep}{0.1cm}
\setlength{\textfloatsep}{0.4cm}
\captionsetup[table]{belowskip=8pt}

\DeclareMathAlphabet{\mathpzc}{OT1}{pzc}{m}{it}

\allowdisplaybreaks[4]

\begin{document}
\title{L\^e numbers and Newton diagram}
\author{Christophe Eyral, Grzegorz Oleksik, Adam R\'o\.zycki}
\keywords{Non-isolated hypersurface singularity, L\^e numbers, Newton diagram, Modified Newton numbers, Iomdine-L\^e-Massey formula}
\subjclass[2010]{32S25, 14J17, 14J70}

\begin{abstract}
We give an algorithm to compute the L\^e numbers of (the germ of) a Newton non-degenerate complex analytic function $f\colon(\cc^n,0) \rightarrow (\cc,0)$ in terms of certain invariants attached to the Newton diagram of the function $f+z_1^{\alpha_1}+\cdots +z_d^{\alpha_d}$, where $d$ is the dimension of the critical locus of $f$ and $\alpha_1,\ldots, \alpha_d$ are sufficiently large integers. This is a version for non-isolated singularities of a famous theorem of A.~G.~Kouchnirenko. As a corollary, we obtain that Newton non-degenerate functions with the same Newton diagram have the same L\^e numbers.
\end{abstract}

\maketitle

\section{Introduction}

The most important numerical invariant attached to a complex analytic function $f\colon(\cc^n,0) \rightarrow (\cc,0)$ with an isolated singularity at $0$ is its Milnor number at $0$ (denoted by $\mu_f(0)$). By a theorem of A.~G.~Kouchnirenko~\cite{K1}, we know that if $f$ is (Newton) non-degenerate and such that its Newton diagram meets each coordinate axis (so-called ``convenient'' function), then $f$ has an isolated singularity at $0$ and $\mu_f(0)$ coincides with the Newton number $\nu(f)$ of $f$ --- a numerical invariant  attached to the Newton diagram of $f$. Actually, if $f$ is a non-degenerate function with an isolated singularity at $0$, then $\mu_f(0)=\nu(f)$ even if $f$ is not convenient (see \cite{BO}). This provides an  elegant and easy way to compute the Milnor number of such functions. 

For a function with a \emph{non-isolated} singularity at~$0$, the Milnor number is no longer relevant. However, we can attach to such a function a series of polar invariants which plays a similar role to that of the Milnor number for an isolated singularity. These polar invariants are called \emph{L\^e numbers}. They were introduced by D.~B.~Massey in the 1990s (see \cite{M3,M4,M}). Then we may wonder whether like for the Milnor number, the L\^e numbers of a non-degenerate function $f$ with a non-isolated singularity at $0$ can also be described with the help of invariants attached to a Newton diagram. In this paper, we positively answer this question. More precisely, we show that the L\^e numbers of a non-degenerate function $f$ can be expressed in terms of certain invariants (which we shall call \emph{modified Newton numbers}) attached to the Newton diagram of the function $f+z_1^{\alpha_1}+\ldots +z_d^{\alpha_d}$, where $d$ is the dimension at $0$ of the critical locus of $f$ and $\alpha_1,\ldots, \alpha_d$ are sufficiently large integers (see Theorem \ref{mt}). 

As an important corollary, we obtain that non-degenerate functions with the same Newton diagram have the same L\^e numbers (see Corollary \ref{cor2}). In particular, any $1$-parameter deformation family of non-degenerate functions with constant Newton diagram has constant L\^e numbers. We recall that families with constant L\^e numbers satisfy remarkable properties. For example, in \cite{M4}, Massey proved that under appropriate conditions the diffeomorphism type of the Milnor fibrations associated to the members of such a family is constant. In \cite{FB}, J.~Fern\'andez de Bobadilla showed that in the special case of families of $1$-dimensional singularities, the constancy of L\^e numbers implies the topological triviality of the family at least if $n\geq 5$. 

The paper is organized as follows. In Section \ref{sect-ln}, we recall the definition of the L\^e numbers. In Section \ref{ndrnn}, following Kouchnirenko's definition of the Newton number, we introduce our modified Newton numbers. Our main result --- the formulas for the L\^e numbers of a non-degenerate function $f$ in terms of the modified Newton numbers of the function $f+z_1^{\alpha_1}+\cdots +z_d^{\alpha_d}$ --- is given in Section \ref{sect-mt}. Corollaries of these formulas are given in Section~\ref{sect-corollaries}. In Section \ref{sect-ex}, we discuss a complete example. Finally, in Sections \ref{pmt} and \ref{pcor2}, we give the proofs of our main result and main corollary respectively.

\section{L\^e numbers}\label{sect-ln}

L\^e numbers are intersection numbers of certain analytic cycles --- so-called \emph{L\^e cycles} --- with certain affine subspaces. The L\^e cycles are defined using the notion of gap sheaf.  In this section, we briefly recall these definitions which are essential for the paper. We also recall the notion of ``polar ratio'' which is involved in so-called \emph{Iomdine-L\^e-Massey formula}. This formula plays a crucial role in the proof of our main theorem.

We follow the presentation given by Massey in \cite{M3,M4,M}.

\subsection{Gap sheaves}
Let $(X,\mathscr{O}_X)$ be a complex analytic space,  $W\subseteq X$ be an analytic subset of $X$, and $\mathscr{I}$ be a coherent sheaf of ideals in $\mathscr{O}_X$. As usual, we denote by $V(\mathscr{I})$ the analytic space defined by the vanishing of $\mathscr{I}$. At each point $x\in V(\mathscr{I})$, we want to consider scheme-theoretically those components of $V(\mathscr{I})$ which are not contained in~$W$. 
For this purpose, we look at a minimal primary decomposition of the stalk $\mathscr{I}_x$ of $\mathscr{I}$ in the local ring $\mathscr{O}_{X,x}$, and we consider the ideal $\mathscr{I}_x \lnot W$ in $\mathscr{O}_{X,x}$ consisting of the intersection of those (possibly embedded) primary components $Q$ of $\mathscr{I}_x$ such that $V(Q)\nsubseteq W$.  
This definition does not depend on the choice of the minimal primary decomposition of $\mathscr{I}_x$. Now, if we perform the operation described above at the point $x$ simultaneously at all points of $V(\mathscr{I})$, then we obtain a coherent sheaf of ideals called a \emph{gap sheaf} and denoted by $\mathscr{I}\lnot W$. Hereafter, we shall denote by $V(\mathscr{I})\lnot W$ the scheme (i.e., the complex analytic space) $V(\mathscr{I}\lnot W)$ defined by the vanishing of the gap sheaf $\mathscr{I}\lnot W$. 

\subsection{L\^e cycles and L\^e numbers}\label{sect-lcln}
Consider an analytic function $f\colon (U,0)\rightarrow(\mathbb{C},0)$, where $U$ is an open neighbourhood of $0$ in $\mathbb{C}^n$, and fix a system of linear coordinates $z=(z_1,\ldots,z_n)$ for $\mathbb{C}^n$. Let $\Sigma f$ be the critical locus of $f$. For $0\leq k\leq n-1$, the $k$th (relative) \emph{polar variety} of $f$ with respect to the coordinates $z$ is the scheme
\begin{equation*}
\Gamma_{f,z}^k:=V\bigg(\frac{\partial f}{\partial z_{k+1}},\ldots,\frac{\partial f}{\partial z_{n}}\bigg) \lnot \Sigma f.
\end{equation*}
The analytic cycle
\begin{equation*}
[\Lambda^k_{f,z}]:=\bigg[\Gamma_{f,z}^{k+1}\cap V\bigg(\frac{\partial f}{\partial z_{k+1}}\bigg)\bigg] - \bigg[\Gamma_{f,z}^k\bigg]
\end{equation*}
is called the $k$th \emph{L\^e cycle} of $f$ with respect to the coordinates $z$. (We always use brackets $[\cdot]$ to denote analytic cycles.)
The $k$th \emph{L\^e number} $\lambda^k_{f,z}(0)$ of $f$ at $0\in\mathbb{C}^n$ with respect to the coordinates $z$ is defined to be the intersection number 
\begin{equation}\label{def-ln}
\lambda^k_{f,z}(0):=\big([\Lambda^k_{f,z}]\cdot [V(z_1,\ldots,z_k)]\big)_0
\end{equation}
provided that this intersection is $0$-dimensional or empty at $0$; otherwise, we say that $\lambda^k_{f,z}(0)$ is \emph{undefined}.\footnote{As usual, $[V(z_1,\ldots,z_k)]$ denotes the analytic cycle associated to the analytic space defined by $z_1=\cdots=z_k=0$. The notation $\big([\Lambda^k_{f,z}]\cdot [V(z_1,\ldots,z_k)]\big)_0$ stands for the intersection number at $0$ of the analytic cycles $[\Lambda^k_{f,z}]$ and $[V(z_1,\ldots,z_k)]$.}
For $k=0$, the relation (\ref{def-ln}) means
\begin{equation*}
\lambda^0_{f,z}(0) = \big([\Lambda^0_{f,z}]\cdot U \big)_0 = \bigg[\Gamma_{f,z}^{1}\cap V\bigg(\frac{\partial f}{\partial z_{1}}\bigg)\bigg]_0.
\end{equation*}

For any $\dim_0\Sigma f< k\leq n-1$, the L\^e number $\lambda^k_{f,z}(0)$ is always defined and equal to zero. For this reason, we usually only consider the L\^e numbers 
\begin{equation*}
\lambda^0_{f,z}(0),\ldots,\lambda^{\dim_0\Sigma f}_{f,z}(0).
\end{equation*}
Note that if $0$ is an \emph{isolated} singularity of $f$, then $\lambda^0_{f,z}(0)$ (which is the only possible non-zero L\^e number) is equal to the Milnor number $\mu_f(0)$ of $f$ at~$0$.

\subsection{Polar ratios}\label{sect-pr}
As already mentioned above, a key ingredient in the proof of our main result is  the Iomdine-L\^e-Massey formula (see \cite[Theorem 4.5]{M}). Roughly, this formula says that if the L\^e numbers of $f$ at $0$ with respect to $z$ exist and if $d:=\dim_0 \Sigma f\geq 1$, then for any integer $\alpha_1$ large enough, $\dim_0\Sigma (f+z_1^{\alpha_1})=d-1$ and the L\^e numbers of $f+z_1^{\alpha_1}$ at $0$ with respect to the rotated coordinates $z^{(1)}:=(z_2,\ldots,z_n,z_1)$ exist and they can be described in terms of the L\^e numbers $\lambda_{f,z}^k(0)$ of the original function $f$. Moreover, the formula says that any $\alpha_1>\rho_{f,z}(0)$ is suitable, where $\rho_{f,z}(0)$ is the maximum ``polar ratio'' of $f$ at $0$ with respect to $z$.
In this section, we recall the definition of polar ratios (see \cite[Definition 4.1]{M}). 

The notation is as in Section \ref{sect-lcln}.
Suppose that $\dim_{0}\Gamma^{1}_{f,z}=1$. Let $\eta$ be an irreducible component of $\Gamma^{1}_{f,z}$ (with its reduced structure) such that $\dim_{0}(\eta\cap V(z_1))=0$.
The \emph{polar ratio} of $\eta$ at~$0$ is the number defined by
\begin{displaymath}
\frac{\bigl([\eta]\cdot[V(f)]\bigr)_{0}} 
{\bigl([\eta]\cdot[V(z_1)]\bigr)_{0}} =
\frac{\Bigl(\Bigl[\eta\Bigr]\cdot 
\Bigl[V\Bigl(\frac{\partial f}{\partial z_1} \Bigr)\Bigr]\Bigr)_{0}}
{\bigl([\eta]\cdot[V(z_1)]\bigr)_{0}} + 1.
\end{displaymath}
If $\dim_{0}(\eta\cap V(z_1))\not=0$, then we say that the polar ratio of $\eta$ at $0$ is equal to $1$. A polar ratio for $f$ at $0$ with respect to $z$ is any one of the polar ratios at~$0$ of any component of $\Gamma^{1}_{f,z}$.  

For example, if $f$ is a homogeneous polynomial and if $\dim_{0}\Gamma^{1}_{f,z}=1$, then each component of $\Gamma^{1}_{f,z}$ is a line, and hence the polar ratios for $f$ at $0$ with respect to $z$ are all equal to $1$ or to the degree $\deg(f)$ of the polynomial $f$ (see \cite[Remark 4.2]{M}). 

In \cite[Section 3.2]{MS}, M. Morgado and M. Saia gave an upper bound for the maximal polar ratio for a semi-weighted homogeneous arrangement.

\section{Newton diagram and modified Newton numbers}\label{ndrnn}

Let $z:=(z_1,\ldots,z_n)$ be a system of coordinates for $\cc^n$, let $U$ be an open neighbourhood of the origin in $\cc^n$, and let 
\begin{equation*}
f\colon(U,0) \rightarrow (\cc,0),\quad
z\mapsto f(z)=\sum _{\alpha} c_{\alpha} z^{\alpha},
\end{equation*} 
be an analytic function, where $\alpha:=(\alpha_1,\ldots,\alpha_n)\in \mathbb{Z}_+^n$, $c_\alpha\in\mathbb{C}$, and $z^{\alpha}$ is a notation for the monomial $z_1^{\alpha_1}\cdots z_n^{\alpha_n}$.  

\subsection{Newton diagram} Here, the reference is Kouchnirenko \cite{K1}.  
The \emph{Newton polyhedron} $\Gamma_{\!+}(f)$ of $f$ (at the origin and with respect to the coordinates $z=(z_1,\ldots,z_n)$) is
the convex hull in $\mathbb{R}_+^n$ of the set
\begin{displaymath}
\bigcup_{c_\alpha\not=0} (\alpha+\mathbb{R}_+^n).
\end{displaymath}
For any $v \in \mathbb{R}_+^n \setminus \{0\}$, put
\begin{equation*}
\begin{aligned}
& \ell(v,\Gamma_{\!+}(f)):=\min \{ \langle v,\alpha\rangle \, ;\, \alpha \in \Gamma_{\!+}(f) \},\\
& \Delta (v,\Gamma _{\!+}(f)):=\{ \alpha \in \Gamma_{\!+}(f)\, ;\,  \langle v,\alpha\rangle= \ell(v,\Gamma_{\!+}(f))\},
\end{aligned}
\end{equation*}
where $\langle \cdot\,,\cdot\rangle$ denotes the standard inner product in $\mathbb{R}^n$.
A subset $\Delta\subseteq\Gamma_{\!+}(f)$ is called a \emph{face} of $\Gamma_{\!+}(f)$ if there exists $v \in \mathbb{R}_+^n \setminus \{0\}$ such that 
$\Delta =\Delta (v,\Gamma_{\!+}(f))$. The \emph{dimension} of a face $\Delta$ of $\Gamma_{\!+}(f)$ is the minimum of the dimensions of the affine subspaces of $\mathbb{R}^n$ containing $\Delta$.  
The \emph{Newton diagram} (also called \emph{Newton boundary}) of~$f$ is the union of the compact faces of $\Gamma_{\!+}(f)$. It is denoted by $\Gamma(f)$. 
We say that $f$ is \emph{convenient} if the intersection of $\Gamma(f)$ with each coordinate axis of $\mathbb{R}^n_+$ is non-empty (i.e., for any $1\leq i\leq n$, the monomial $z_i^{\alpha_i}$, $\alpha_i\geq 1$, appears in the expression $\sum_\alpha c_\alpha z^\alpha$ with a non-zero coefficient).

For any face~$\Delta\subseteq\Gamma(f)$, define the \emph{face function} $f_{\Delta}$ by
\begin{displaymath}
f_{\Delta}(z) := \sum_{\alpha\in\Delta} c_\alpha z^\alpha.
\end{displaymath} 
We say that $f$ is \emph{Newton non-degenerate} (in short, non-degenerate) on the face $\Delta$ if the equations 
\begin{displaymath}
\frac{\partial f_{\Delta}}{\partial z_1}(z) = \cdots =
\frac{\partial f_{\Delta}}{\partial z_n}(z)=0
\end{displaymath}
have no common solution on $(\cc \setminus \{0\})^n$. We say that 
$f$ is \emph{(Newton) non-degenerate} if it is non-degenerate on every face $\Delta$ of~$\Gamma(f)$. 

\subsection{A bound for non-degeneracy of certain functions}\label{subsect-bnd}
Another important ingredient in the proof of our main theorem is Lemma 3.7 of \cite{BO}. This lemma asserts that if $f$ is a non-degenerate function with a singularity at $0$, then there exists a constant $m(f)$ such that for any $\alpha_i> m(f)$, the function $f+z_i^{\alpha_i}$ is non-degenerate too.
Such a (non unique) number $m(f)$ is defined as follows.
For each face $\Delta\subseteq\Gamma(f)$ with maximal dimension (i.e., $\Delta$ is not contained in any other face), choose a vector $v_\Delta \in \mathbb{R}_+^n \setminus \{0\}$ such that $\Delta =\{ \alpha \in \Gamma_{\!+}(f)\, ;\,  \langle v_\Delta,\alpha\rangle= \ell(v_\Delta,\Gamma_{\!+}(f))\}$, and define
\begin{displaymath}
W:=\bigcup_{{\Delta\subseteq\Gamma(f)}\atop{\mbox{\tiny max dim}}} \{ \alpha \in \mathbb{R}_+^n\, ;\, \langle v_\Delta,\alpha\rangle \leq \ell(v_\Delta,\Gamma_{\!+}(f))\},
\end{displaymath} 
where the union is taken over all maximal dimensional faces $\Delta\subseteq\Gamma(f)$. Clearly, $W$ is a compact set and it intersects each coordinate axis of $\mathbb{R}_+^n$ in a closed interval, say $[0,w_i]$ for some $w_i$. Then define
\begin{displaymath}
m(f):= \max_{1\leq i\leq n} w_i.
\end{displaymath} 
Of course, $m(f)$ depends on the choice of the vectors $v_\Delta$. It is possible to define a ``smallest'' number $m_0(f)$ that also guarantees the non-degeneracy of the functions $f+z_i^{\alpha_i}$ with $\alpha_i>m_0(f)$ (see \cite[Section 2]{F}). However, for our purpose, we shall not need it.

\subsection{Newton number}\label{sect-knn}
Again, the reference for this section is \cite{K1}. Throughout the paper, for any subsets $I \subseteq \{1,\ldots,n\}$ and $X\subseteq\mathbb{R}^n_+$, we shall use the following notation:
\begin{equation*}
X^I := \{(x_1,\ldots,x_n) \in X \, ;\,  x_i=0 \text{ if } i \not\in I\}.
\end{equation*} 
In particular, for any $i\in\{1,\ldots,n\}$, the set $X^{\{i\}}$ is nothing but the intersection of $X$ with the $i$th coordinate axis of $\mathbb{R}^n_+$.

Let $\Gamma_{\!-}(f)$ denote the cone over $\Gamma(f)$ with the origin as vertex. If $f$ is convenient, then the \emph{Newton number} $\nu(f)$ of $f$ is defined by
\begin{equation}\label{KNN}
\nu(f):=\sum _{I \subseteq \{1,\ldots,n\}} (-1)^{n-|I|} |I|! 
\Vol_{|I|}(\Gamma_{\! -}(f)^I),
\end{equation}
where $|I|$ is the cardinality of $I$ and $\Vol _{|I|} (\Gamma_{\! -}(f)^I)$ is the $|I|$-dimensional Euclidean volume of $\Gamma_{\! -}(f)^I$. For $I=\emptyset$, the subset $\Gamma_{\!-}(f)^\emptyset$ reduces to~$\{0\}$, and we set $\Vol_{0} (\Gamma_{\!-}(f)^\emptyset)=1$.  

The Newton number can also be defined even if $f$ is not convenient. More precisely, if $I$ is the non-empty subset of $\{1,\ldots,n\}$ such that $\Gamma(f)$ meets the $i$th coordinate axis of $\mathbb{R}^n_+$ if and only if $i\notin I$, then the Newton number $\nu(f)$ of $f$ is defined as
 \begin{equation*}
\nu(f):=\sup_{m\in\mathbb{Z}_+} \nu \bigg(f+\sum_{i\in I}z_i^m\bigg),
\end{equation*}
where of course the Newton number of the (convenient) function $f+\sum_{i\in I}z_i^m$ is given by \eqref{KNN}.
 
\subsection{Modified Newton numbers}\label{sect-rnn}
Following Kouchnirenko's definition of the Newton number, we now introduce our \emph{modified Newton numbers}. 

Let $I$ be a non-empty subset of $\{1,\ldots,n\}$ such that $\Gamma(f)^I\not=\emptyset$. By \cite[Theorem 1]{Ed}, choose a simplicial decomposition of $\Gamma(f)^I$ in which the vertices of a simplex are $0$-dimensional faces of $\Gamma(f)^I$ (such a decomposition is not unique). The cones spanned by the origin $0\in\mathbb{R}^n$ and such simplexes give a simplicial decomposition
\begin{equation*}
\Xi_I:=\{S_{I,r}\}_{1\leq r\leq r_I}
\end{equation*}
of $\Gamma_{\! -}(f)^I$.
Note that 
\begin{equation}\label{rel-vss}
\Vol_{|I|} (\Gamma_{\! -}(f)^I)=\sum_{S_{I,r}\in \Xi_{I},\, \dim S_{I,r}=|I|} 
\Vol _{|I|} (S_{I,r}).
\end{equation}
Clearly, each simplex $S_{I,r}\subseteq (\mathbb{R}^n_+)^I$ may be identified to a simplex (still denoted by $S_{I,r}$) of $\mathbb{R}^{|I|}$, and with such an identification, the volume $\Vol_{|I|} (S_{I,r})$ of a simplex $S_{I,r}$ with maximal dimension (i.e., with dimension $|I|$) is given by
\begin{equation}\label{volsimplex}
\Vol_{|I|} (S_{I,r}) = \pm \frac{1}{|I|!}\det
\left(\begin{matrix}
0 & S_{I,r;1} & \cdots & S_{I,r;|I|}\\
1 & 1 & \cdots & 1
\end{matrix}
\right),
\end{equation}
where $0,S_{I,r;1},\ldots,S_{I,r;|I|}$ are the column vectors representing the coordinates of the vertices of the simplex $S_{I,r}\subseteq\mathbb{R}^{|I|}$. Note that each such column vector has $|I|$ components, so that the matrix in \eqref{volsimplex} has dimension
$(|I|+1)\times (|I|+1)$.

Let $J$ be another subset of $\{1,\ldots,n\}$. We suppose that for any $i\in J$ the Newton boundary $\Gamma(f)$ meets the $i$th coordinate axis of $\mathbb{R}^n_+$. 
Then to each $i_0 \in \{1,\ldots,n\}$, we associate a subset $\Xi_{I,J,i_0}$ of $\Xi_{I}$ (depending on $I$, $J$ and~$i_0$) as follows. If $i_0\in I\cap J$, then we define $\Xi_{I,J,i_0}$ as the set of all simplexes $S_{I,r}\in\Xi_I$ (as simplexes in $(\mathbb{R}^n_+)^I$) with maximal dimension $|I|$ such that for any $i\in J$ the following property holds true:
\begin{equation*}
S_{I,r}^{\{i\}}=S_{I,r}\cap\Gamma_{\! -}(f)^{\{i\}} \mbox{ is an edge of } S_{I,r} 
\Leftrightarrow i=i_0. 
\end{equation*}
(As usual, by an ``edge'' of a simplex we mean a $1$-dimensional face.)
If $i_0\notin J$ (in particular if $J=\emptyset$) or if $i_0\notin I$, then we set $\Xi_{I,J,i_0}:=\emptyset$.

By definition, if $S_{I,r}$ is a simplex of $\Xi_{I,J,i_0}$, then it has maximal dimension and possesses a vertex with coordinates of the form $(0,\ldots,0,\alpha_{i_0},0,\ldots,0)\in\mathbb{R}^n$ (for some $\alpha_{i_0}$ located at the $i_0$th place). To each such a simplex $S_{I,r}\in\Xi_{I,J,i_0}$, we associate a (unique) ``reduced'' simplex $\widetilde{S}_{I,r}$ defined by the same vertices as those of $S_{I,r}$ with the exception of the vertex $(0,\ldots,0,\alpha_{i_0},0,\ldots,0)$ which we replace by $(0,\ldots,0,1,0,\ldots,0)$. We denote by $\widetilde{\Xi}_{I,J,i_0}$ the set of such reduced simplexes.

By convention, for the next definition and all the statements hereafter, we agree that if $I$ is a non-empty subset of $\{1,\ldots,n\}$ such that $\Gamma(f)^I$ is empty, then the corresponding ``simplicial decomposition'' $\Xi_I$ is the empty set.

\begin{df}\label{def-mnn}
For each $J$, $i_0$, and each collection $\Xi:=\{\Xi_I\}_{I \subseteq \{1,\ldots,n\},\, I\not=\emptyset}$ as above, we define a \emph{modified Newton number} $\widetilde{\nu}_{\Xi,J,i_0} (f)$ for the function $f$ by
\begin{equation*}
\widetilde{\nu}_{\Xi,J,i_0} (f) := \sum_{I \subseteq \{1,\ldots,n\},\, I\ni i_0} \Bigg(\sum_{\widetilde{S}_{I,r}\in \widetilde{\Xi}_{I,J,i_0}}
(-1)^{n-|I|} |I|! \Vol_{|I|} (\widetilde{S}_{I,r})\Bigg).
\end{equation*}
(If $\Xi_I=\emptyset$ or if $i_0\notin J$, then $\Xi_{I,J,i_0}=\widetilde{\Xi}_{I,J,i_0}=\emptyset$, and the corresponding term in the above sum is zero by convention.)
\end{df}

Similarly, we introduce the subset $\Xi_{I,J,0}$ of $\Xi_{I}$ consisting of those simplexes $S_{I,r}\in\Xi_I$ with maximal dimension and such that for any $i\in J$ the intersection $S_{I,r}^{\{i\}}=S_{I,r}\cap\Gamma_{\! -}(f)^{\{i\}}$ is not an edge of $S_{I,r}$.

\begin{df}\label{def-smnn}
For each $J$ and each $\Xi:=\{\Xi_I\}_{I \subseteq \{1,\ldots,n\},\, I\not=\emptyset}$ as above, we define a \emph{special} modified Newton number ${\nu}_{\Xi,J,0} (f)$ for the function $f$ by
\begin{equation*}
{\nu}_{\Xi,J,0} (f) := \sum_{I \subseteq \{1,\ldots,n\},\, I\not=\emptyset} \Bigg(\sum_{{S}_{I,r}\in {\Xi}_{I,J,0}} (-1)^{n-|I|} |I|! \Vol_{|I|} ({S}_{I,r})\Bigg).
\end{equation*}
\end{df}

Let us emphasize the fact that the simplexes involved in the definition of the modified Newton number $\widetilde{\nu}_{\Xi,J,i_0} (f)$ are \emph{reduced} simplexes, while those used to define the special modified Newton number ${\nu}_{\Xi,J,0} (f)$ are \emph{not} reduced.

\section{Formulas for the L\^e numbers of a non-degenerate function}\label{sect-mt}

Let $z:=(z_1,\ldots,z_n)$ be a system of linear coordinates for $\cc^n$, let $U$ be an open neighbourhood of the origin in $\cc^n$, and let $f\colon(U,0) \rightarrow (\cc,0)$ be a \emph{non-degenerate} analytic function. We denote by $\Sigma f$ the critical locus of~$f$, and we suppose that $d:=\dim_0\Sigma f\geq 1$. 
We also assume that the L\^e numbers 
\begin{equation*}
\lambda^0_{f,z}(0),\ldots,\lambda^{d}_{f,z}(0)
\end{equation*}
of $f$ at $0$ with respect to the coordinates $z=(z_1,\ldots,z_n)$ are defined.
For example, if the coordinates are ``prepolar'' for $f$ (see \cite[Definition 1.26]{M}), then the corresponding L\^e numbers do exist. In particular, this is the case if $f$ has an ``aligned'' singularity at $0$ (e.g., a line singularity) and the coordinates are ``aligning'' for $f$ at $0$ (see \cite[Definition 7.1]{M}). 

For any $1\leq q\leq d$, we consider the function
\begin{equation}\label{lffq}
f_q(z):=f(z)+z_1^{\alpha_1}+ \dots +z_q^{\alpha_q},
\end{equation}
where ${\alpha_1},\ldots,{\alpha_q}$ are integers such that, for any $1\leq p\leq q$,
\begin{equation*}%\label{sufflarge}
\alpha_{p} > \max \{2,\rho_{f_{p-1},z^{(p-1)}}(0),m(f_{p-1})\}.
\end{equation*}
Here, $\rho_{f_{p-1},z^{(p-1)}}(0)$ is the maximum polar ratio for $f_{p-1}$ at $0$ with respect to the rotated coordinates 
\begin{equation*}
z^{(p-1)}:=(z_p,\ldots,z_n,z_1,\ldots,z_{p-1}),
\end{equation*}
and $m(f_{p-1})$ is a bound which guarantees the non-degeneracy of the function $f_p$ (see Sections \ref{sect-pr} and \ref{subsect-bnd}).
(By $f_0$ and $z^{(0)}$ we~mean~$f$ and $z$ respectively.) 
For example, if $f$ is a homogeneous polynomial such that $d:=\dim_0\Sigma f=1$, then we can take $f_d(z)=f_1(z):=f(z)+z_1^{\alpha_1}$, where $\alpha_1>\mbox{max}\{2, \deg(f)\}$. 

Hereafter, we are mainly interested in the modified Newton numbers of the function $f_d$. For each non-empty subset $I \subseteq \{1,\ldots,n\}$, we choose a simplicial decomposition 
\begin{equation*}
\Xi_I:=\{S_{I,r}\}_{1\leq r\leq r_I}
\end{equation*}
 of $\Gamma_{\! -}(f_d)^I$ as in Section \ref{sect-rnn} (again, if $\Gamma(f_d)^I=\emptyset$, then $\Xi_I=\emptyset$), and we write $\Xi:=\{\Xi_I\}_{I \subseteq \{1,\ldots,n\},\, I\not=\emptyset}$. Since throughout this section we shall only consider modified Newton numbers of the form 
\begin{equation*}
{\nu}_{\Xi,\{1,\ldots,d\},0} (f_d)
\quad\mbox{and}\quad
\widetilde{\nu}_{\Xi,\{1,\ldots,d\},k} (f_d)
\end{equation*}
($1\leq k\leq d$) where $d$ is the dimension at $0$ of the critical locus $\Sigma f$, we may simplify the notation as follows: 
\begin{equation*}
{\nu}_{\Xi,0} (f_d):={\nu}_{\Xi,\{1,\ldots,d\},0} (f_d)
\quad\mbox{and}\quad
\widetilde{\nu}_{\Xi,k} (f_d):=\widetilde{\nu}_{\Xi,\{1,\ldots,d\},k} (f_d).
\end{equation*}

Here is our main result.

\begin{tw}\label{mt}
Suppose that $f$ is non-degenerate, $d:=\dim_0\Sigma f\geq 1$ and the L\^e numbers $\lambda^k_{f,z}(0)$ of $f$ at $0$ with respect to the coordinates $z=(z_1,\ldots,z_n)$ are defined for any $0\leq k\leq d$. Then the following two assertions hold true.
\begin{enumerate}
\item
The modified Newton numbers ${\nu}_{\Xi,0} (f_d)$ and $\widetilde{\nu}_{\Xi,k} (f_d)$
of the function $f_d$ do not depend on the choice of $\Xi:=\{\Xi_I\}_{I \subseteq \{1,\ldots,n\},\, I\not=\emptyset}$. Therefore, we may further simplify the notation as follows: 
\begin{equation*}
{\nu}_{0} (f_d):={\nu}_{\Xi,0} (f_d)
\quad\mbox{and}\quad
\widetilde{\nu}_{k} (f_d):=\widetilde{\nu}_{\Xi,k} (f_d).
\end{equation*}
\item
The L\^e numbers $\lambda^0_{f,z}(0),\ldots,\lambda^{d}_{f,z}(0)$ are given by the following for\penalty 10000 mulas:\vskip 1mm
\begin{enumerate}
\item[$\cdot$]
$\lambda^0_{f,z}(0)=(-1)^n+\nu_0(f_d)+\widetilde{\nu}_1(f_d)$;\vskip 1mm
\item[$\cdot$]
$\lambda^k_{f,z}(0)=(-1)^{k-1}(\widetilde{\nu}_k(f_d)-\widetilde{\nu}_{k+1}(f_d))$
for $1\leq k\leq d-1$ (if~$d\geq \penalty 10000 2$);\vskip 1mm
\item[$\cdot$]
$\lambda^d_{f,z}(0)=(-1)^{d-1}\widetilde{\nu}_d(f_d)$.\vskip 1mm
\end{enumerate}
\end{enumerate}
\end{tw}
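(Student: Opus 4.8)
The plan is to argue by induction on $d=\dim_0\Sigma f$, using the Iomdine--L\^e--Massey formula to reduce the number of variables appearing in the ``extra'' monomials, together with Kouchnirenko's theorem (in the non-convenient version of \cite{BO}) to identify L\^e numbers/Milnor numbers with Newton numbers at the base of the induction. First I would treat the base case $d=1$. Here $f_1=f+z_1^{\alpha_1}$ has a critical locus of dimension $0$ at the origin, so by the hypotheses on $\alpha_1$ (namely $\alpha_1>\max\{2,\rho_{f,z}(0),m(f_0)\}$) the function $f_1$ is non-degenerate and has an isolated singularity, whence $\mu_{f_1}(0)=\nu(f_1)$ by \cite{K1,BO}. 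The Iomdine--L\^e--Massey formula (\cite[Theorem 4.5]{M}) then expresses $\mu_{f_1}(0)=\lambda^0_{f_1,z^{(1)}}(0)$ and $\lambda^1_{f_1,z^{(1)}}(0)$ in terms of $\lambda^0_{f,z}(0)$ and $\lambda^1_{f,z}(0)$ (roughly $\mu_{f_1}(0)=\lambda^0_{f,z}(0)+(\alpha_1-1)\lambda^1_{f,z}(0)$ up to a correction); inverting this, together with a direct computation of the $z_1$-directional L\^e number of $f_1$, recovers $\lambda^0_{f,z}(0)$ and $\lambda^1_{f,z}(0)$. The real content of the base case is then purely combinatorial: one must show that the Newton number $\nu(f_1)$, computed by Kouchnirenko's alternating-sum-of-volumes formula \eqref{KNN}, decomposes as $(-1)^n+\nu_0(f_1)+\widetilde\nu_1(f_1)+(\alpha_1-1)\cdot(\text{something})$, matching the combinatorial definitions in Section~\ref{sect-rnn}. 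This is where the reduced simplexes $\widetilde S_{I,r}$ enter: replacing the vertex $(0,\ldots,\alpha_1,\ldots,0)$ by $(0,\ldots,1,\ldots,0)$ exactly factors out the linear-in-$\alpha_1$ part of the volume, so that $\Vol_{|I|}(S_{I,r})=\alpha_1\Vol_{|I|}(\widetilde S_{I,r})$ for simplexes touching the $z_1$-axis, and this is the algebraic identity driving everything.

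For the inductive step I would pass from $f_{q-1}$ to $f_q=f_{q-1}+z_q^{\alpha_q}$ and from the coordinates $z^{(q-1)}$ to $z^{(q)}$. The inductive hypothesis gives the L\^e numbers of $f_{q-1}$ (with respect to $z^{(q-1)}$) as the appropriate modified Newton numbers of $f_{q-1}$ (viewed after the earlier reductions). Applying Iomdine--L\^e--Massey once more, with $\alpha_q>\max\{2,\rho_{f_{q-1},z^{(q-1)}}(0),m(f_{q-1})\}$, expresses the L\^e numbers of $f_q$ in terms of those of $f_{q-1}$, hence in terms of the modified Newton numbers of $f_{q-1}$; the last step is then to check the compatibility of the modified Newton numbers under adding the monomial $z_q^{\alpha_q}$, i.e. that $\widetilde\nu_k(f_q)$, $\nu_0(f_q)$ relate to $\widetilde\nu_k(f_{q-1})$, $\nu_0(f_{q-1})$ by exactly the same linear recursion that Iomdine--L\^e--Massey imposes on the L\^e numbers. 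For part (1), the independence of $\nu_0(f_d)$ and $\widetilde\nu_k(f_d)$ from the choice of simplicial decomposition $\Xi$, I would note that once part (2) is established these quantities equal intrinsic invariants (the L\^e numbers), which are manifestly independent of $\Xi$; so part (1) follows a posteriori. (Alternatively, one can prove independence directly by the standard subdivision argument — any two simplicial decompositions have a common refinement, and both the special and the reduced modified Newton numbers are additive under subdivision of a maximal-dimensional simplex, the reduction operation on the distinguished vertex commuting with subdivision — but going through part (2) is cleaner.)

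The main obstacle, as I see it, is the bookkeeping in matching the combinatorics of the reduced simplicial decomposition with the arithmetic of the Iomdine--L\^e--Massey recursion, in the presence of the subsets $I$ and $J=\{1,\ldots,d\}$. Concretely: after $q$ reductions, the function $f_q$ has monomials $z_1^{\alpha_1},\ldots,z_q^{\alpha_q}$ on the respective axes, and in Kouchnirenko's sum \eqref{KNN} for $\nu(f_q)$ one must track, face by face (indexed by $I$), whether $I$ contains each of $1,\ldots,d$, which axes are ``edges'' of a given maximal simplex, and separate the part that is multilinear in $\alpha_1,\ldots,\alpha_q$ from the constant part. The definitions of $\Xi_{I,J,i_0}$ (simplexes meeting the $i_0$-axis in an edge, for exactly one $i\in J$) and $\Xi_{I,J,0}$ (simplexes meeting \emph{no} $i$-axis with $i\in J$ in an edge) are precisely designed so that Kouchnirenko's volume sum for $\nu(f_d)$ reorganizes into $(-1)^n+\nu_0(f_d)+\sum_k(\text{coefficient})\widetilde\nu_k(f_d)$, but verifying this reorganization rigorously — and checking that every maximal simplex falls into exactly one of these classes relative to each $i_0$ — requires a careful case analysis that I expect to be the longest and most error-prone part of the argument. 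I would isolate it as a combinatorial lemma about decomposing $\Vol_{|I|}(\Gamma_{\!-}(f_d)^I)$ along the distinguished axes, prove that lemma first, and then feed it into the inductive Iomdine--L\^e--Massey machinery.
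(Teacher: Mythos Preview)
Your combinatorial instinct in the last paragraph is exactly what the paper exploits --- indeed, identifying the linear part of Kouchnirenko's volume sum $\nu(f_d)$ in the variables $\alpha_1,\ldots,\alpha_d$ with $(-1)^n+\nu_0(f_d)+\sum_{i_0}\alpha_{i_0}\widetilde\nu_{i_0}(f_d)$ is precisely the content of the paper's Lemma~\ref{fl}. But the paper does \emph{not} package this in an induction on $d$. Instead, it applies Iomdine--L\^e--Massey $d$ times in one stroke to obtain
\[
\mu_{f_d}(0)=\lambda^0_{f,z}(0)+\sum_{k=1}^d\Bigl(\textstyle\prod_{q=1}^k(\alpha_q-1)\Bigr)\lambda^k_{f,z}(0),
\]
views both this and $\nu(f'_d)$ (for a convenient completion $f'_d$) as polynomials in $\alpha_1,\ldots,\alpha_d,\alpha_{i_1},\ldots,\alpha_{i_p}$, observes that they agree on a set of the form $\alpha_1\gg 0,\ \alpha_2\gg\alpha_1,\ldots$ and hence identically (this is the elementary Lemma~\ref{lemma-ul} in the appendix), and then simply equates the coefficients of the \emph{linear} part. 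On the Milnor side those coefficients are the alternating sums $\sum_{k\ge i}(-1)^{k-1}\lambda^k$; on the Newton side they are the $\widetilde\nu_{i_0}(f_d)$ and $\nu_0(f_d)+(-1)^n$. Inverting this triangular system gives (2), and (1) follows because the $P_{i_0}$ are a~priori independent of $\Xi$.

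Your inductive scheme would instead require, at each step, a combinatorial compatibility lemma of the form ``$\widetilde\nu_{\{q,\ldots,d\},k}(f_d)$ and $\widetilde\nu_{\{q+1,\ldots,d\},k}(f_d)$ differ by exactly what ILM predicts''. Writing this out, one sees that the primed modified Newton numbers (with $J=\{2,\ldots,d\}$) genuinely depend on $\alpha_1$, so the identity you need is itself a polynomial identity in $\alpha_1$ --- at which point you are essentially redoing the paper's polynomial-comparison argument one variable at a time, with no saving. The direct route is both shorter and avoids having to juggle modified Newton numbers with varying index sets $J$.

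There is also a genuine gap in your outline: you implicitly assume that $\Gamma(f_d)$ meets the $i$th axis precisely at $(0,\ldots,\alpha_i,\ldots,0)$ for $1\le i\le d$, i.e.\ that $f$ itself has no pure monomial $c_iz_i^{a_i}$ with $1\le i\le d$. This is not automatic and is proved separately in the paper (Lemma~\ref{lemma-nm}) using the Kouchnirenko-condition criterion for isolated singularities from~\cite{BO}; without it, $\nu(f_d)$ would not even be a polynomial in the $\alpha_i$, and your volume-scaling identity $\Vol(S_{I,r})=\alpha_{i_0}\Vol(\widetilde S_{I,r})$ would fail.
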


Theorem \ref{mt} is a version for non-isolated singularities of the  Kouchnirenko theorem mentioned in the introduction. It will be proved in Section \ref{pmt}. The formulas given in item (2) reduce the calculation of the L\^e numbers of a non-degenerate function to a simple computation of volumes of simplexes. Certainly, these formulas are well suited for computer algebra programs.

\section{Corollaries}\label{sect-corollaries}

Let $z=(z_1,\ldots,z_n)$ be linear coordinates for $\mathbb{C}^n$.
The first important corollary of Theorem \ref{mt} is the invariance of the L\^e numbers within the class of non-degenerate functions with fixed Newton diagram. More precisely we have the following statement.

\begin{wn}\label{cor2}
Let $f,g \colon (U,0) \rightarrow (\cc , 0)$ be two non-degenerate analytic functions, where $U$ is an open neighbourhood of the origin of $\mathbb{C}^n$. Suppose that the dimensions at~$0$ of the critical loci $\Sigma f$ and $\Sigma g$ of $f$ and $g$, respectively, are greater than or equal to $1$.  If furthermore $\Gamma(f)=\Gamma(g)$ and the L\^e numbers of $f$ and $g$ at~$0$ with respect to the coordinates $z=(z_1,\ldots,z_n)$ exist, then $\dim_0\Sigma f=\dim_0\Sigma g$, and for any $0\leq k\leq n-1$, we have
\begin{equation*}
\lambda^k_{f,z}(0) =\lambda^k_{g,z}(0).
\end{equation*}
\end{wn}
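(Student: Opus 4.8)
\emph{Plan.} The idea is to deduce the statement from Theorem~\ref{mt}, using that the modified Newton numbers of a function are visibly determined by its Newton diagram. Set $d:=\dim_0\Sigma f$. First I would fix integers $\alpha_1,\ldots,\alpha_d$ so large that, for each $1\le p\le d$,
\[
\alpha_p>\max\{2,\ \rho_{f_{p-1},z^{(p-1)}}(0),\ m(f_{p-1}),\ \rho_{g_{p-1},z^{(p-1)}}(0),\ m(g_{p-1})\},
\]
where $f_j$ is as in Section~\ref{sect-mt} (for these particular $\alpha_p$), $g_j:=g+z_1^{\alpha_1}+\cdots+z_j^{\alpha_j}$, and $z^{(p-1)}$, $\rho$, $m$ are as in Sections~\ref{sect-pr} and~\ref{sect-mt}. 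Such $\alpha_p$ are admissible for $f$ in the sense of Section~\ref{sect-mt}, and large enough that the corresponding non-degeneracy and Iomdine-L\^e-Massey reductions for $g$ are valid too; in particular, by iterating \cite[Lemma~3.7]{BO}, all the $f_j$ and $g_j$ are Newton non-degenerate. Moreover, for such large $\alpha_p$ the Newton polyhedron of $h+z_1^{\alpha_1}+\cdots+z_j^{\alpha_j}$ depends only on $\Gamma_{\!+}(h)$ and $(\alpha_1,\ldots,\alpha_j)$; hence, as $\Gamma(f)=\Gamma(g)$ (equivalently $\Gamma_{\!+}(f)=\Gamma_{\!+}(g)$), one obtains $\Gamma(f_j)=\Gamma(g_j)$ for every $0\le j\le d$.

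\emph{Equality of the dimensions --- the main obstacle.} This is the one step that truly goes beyond Theorem~\ref{mt}, and I expect it to be the hard part. Assume without loss of generality that $\dim_0\Sigma f\ge\dim_0\Sigma g$. Iterating the dimension assertion of the Iomdine-L\^e-Massey formula (Section~\ref{sect-pr}) yields $\dim_0\Sigma f_j=\dim_0\Sigma f-j$ for $0\le j\le\dim_0\Sigma f$, and $\dim_0\Sigma g_j=\dim_0\Sigma g-j$ for $0\le j\le\dim_0\Sigma g$; consequently the least $j\in\{0,\ldots,d\}$ for which $f_j$ has an isolated singularity at $0$ equals $\dim_0\Sigma f$, and the least such $j$ for $g_j$ equals $\dim_0\Sigma g$. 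I would then invoke the Kouchnirenko-type fact that, for a Newton non-degenerate function, whether the origin is an isolated singularity is determined by the Newton diagram alone (see \cite{K1,BO}). Since $\Gamma(f_j)=\Gamma(g_j)$ and both $f_j$ and $g_j$ are non-degenerate for every $j$, the functions $f_j$ and $g_j$ have an isolated singularity for exactly the same indices $j$, and in particular for the same least one; therefore $\dim_0\Sigma f=\dim_0\Sigma g=:d$. The delicate points here are to verify that non-degeneracy is preserved at every stage (iterating \cite[Lemma~3.7]{BO}) and to extract the precise combinatorial characterization of isolated non-degenerate singularities.

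\emph{Equality of the L\^e numbers.} Having established $\dim_0\Sigma f=\dim_0\Sigma g=d\ge 1$, I would apply Theorem~\ref{mt} to $f$ and to $g$ separately --- the hypotheses hold for both, since they are non-degenerate, their critical loci have dimension $d\ge 1$ at $0$, and their L\^e numbers with respect to $z$ exist by assumption --- so that $\lambda^k_{f,z}(0)$ is expressed through $\nu_0(f_d)$ and the $\widetilde{\nu}_k(f_d)$, and $\lambda^k_{g,z}(0)$ through $\nu_0(g_d)$ and the $\widetilde{\nu}_k(g_d)$. Here $f_d$ and $g_d$ carry the same $d$ adjoined monomials $z_1^{\alpha_1},\ldots,z_d^{\alpha_d}$ precisely because $\dim_0\Sigma f=\dim_0\Sigma g=d$, and $J=\{1,\ldots,d\}$ is an admissible choice for both, those monomials making $\Gamma(f_d)$ and $\Gamma(g_d)$ meet the first $d$ coordinate axes. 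Now $\Gamma(f_d)=\Gamma(g_d)$, so the cones $\Gamma_{\!-}(f_d)^I$ and $\Gamma_{\!-}(g_d)^I$ agree for every $I\subseteq\{1,\ldots,n\}$, and one may run both constructions with a single simplicial decomposition $\Xi$; then all the simplexes, reduced simplexes, edge conditions and volumes entering Definitions~\ref{def-mnn} and~\ref{def-smnn} are the same for $f_d$ and $g_d$, so $\nu_{\Xi,0}(f_d)=\nu_{\Xi,0}(g_d)$ and $\widetilde{\nu}_{\Xi,k}(f_d)=\widetilde{\nu}_{\Xi,k}(g_d)$ for $1\le k\le d$. By part~(1) of Theorem~\ref{mt} these numbers do not depend on $\Xi$, hence $\nu_0(f_d)=\nu_0(g_d)$ and $\widetilde{\nu}_k(f_d)=\widetilde{\nu}_k(g_d)$; substituting into the formulas of part~(2) gives $\lambda^k_{f,z}(0)=\lambda^k_{g,z}(0)$ for $0\le k\le d$. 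Finally, for $d<k\le n-1$ both L\^e numbers vanish by the remark in Section~\ref{sect-lcln}, so the equality holds for all $0\le k\le n-1$, which --- together with the equality of the dimensions --- is exactly the assertion of the corollary.
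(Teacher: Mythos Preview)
Your proposal is correct and follows essentially the same route as the paper: the paper also picks $\alpha_p>\max\{2,\rho_{f_{p-1},z^{(p-1)}}(0),\rho_{g_{p-1},z^{(p-1)}}(0),m(f_{p-1}),m(g_{p-1})\}$, reduces the dimension equality to the fact that a non-degenerate function has an isolated singularity iff its Newton diagram satisfies the Kouchnirenko condition (invoking \cite[Corollary~2.9 and Theorem~3.1]{BO} rather than a vague reference to \cite{K1,BO}), and then deduces $\lambda^k_{f,z}(0)=\lambda^k_{g,z}(0)$ directly from Theorem~\ref{mt} together with $\Gamma(f_d)=\Gamma(g_d)$. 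The only cosmetic difference is that the paper argues the dimension equality by contradiction (assuming $d<s$ and deriving that $g_d$ would have an isolated singularity) rather than via your ``least $j$'' formulation.
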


Corollary \ref{cor2} will be proved in Section \ref{pcor2}. In particular, it implies that any $1$-parameter deformation family of non-degenerate functions with constant Newton diagram has constant L\^e numbers, provided that these numbers exist. Here is a more precise statement.

\begin{wn}\label{cor3}
Let $\{f_t\}$ be a $1$-parameter deformation family of analytic functions $f_t$ defined in an open neighbourhood of $0\in\mathbb{C}^n$ and depending analytically on the parameter $t\in\mathbb{C}$. If for any sufficiently small $t$ (say $\vert t\vert\leq\varepsilon$ for some $\varepsilon>0$), the function $f_t$ is non-degenerate, $\Gamma(f_t)=\Gamma(f_0)$ and all the L\^e numbers $\lambda_{f_t,z}^k (0)$ are defined, then $\dim_0\Sigma f_t=\dim_0\Sigma f_0$ and $\lambda^k_{f_t,z}(0)=\lambda^k_{f_0,z}(0)$ for all small~$t$.
\end{wn}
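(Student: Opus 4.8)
The plan is to deduce Corollary~\ref{cor3} from Corollary~\ref{cor2}, applied for each fixed $t$ with $\vert t\vert\le\varepsilon$ to the pair given by $f:=f_t$ and $g:=f_0$. By hypothesis $f_t$ and $f_0$ are non-degenerate, $\Gamma(f_t)=\Gamma(f_0)$, and all the L\^e numbers of $f_t$ and of $f_0$ at $0$ with respect to $z$ are defined; thus every hypothesis of Corollary~\ref{cor2} is available \emph{except} the requirement that $\dim_0\Sigma f_t\ge 1$ and $\dim_0\Sigma f_0\ge 1$. Once this last point is settled for a given $t$, Corollary~\ref{cor2} yields at once $\dim_0\Sigma f_t=\dim_0\Sigma f_0$ and $\lambda^k_{f_t,z}(0)=\lambda^k_{f_0,z}(0)$ for all $0\le k\le n-1$, which is precisely the assertion. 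So the whole matter reduces to controlling the dimension of the critical locus along the family.

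For this I would use two facts about the Newton number. First, $\nu(\cdot)$ is by its very definition a function of the Newton diagram alone, so $\Gamma(f_t)=\Gamma(f_0)$ forces $\nu(f_t)=\nu(f_0)$. Second, for a non-degenerate function $h$ one has $\dim_0\Sigma h=0$ (that is, $h$ has an isolated singularity at $0$) if and only if $\nu(h)<\infty$, and in that case moreover $\mu_h(0)=\nu(h)$; this is Kouchnirenko's theorem together with its extension to the non-convenient case (see \cite{K1,BO}). Combining the two gives $\dim_0\Sigma f_t=0\Leftrightarrow\dim_0\Sigma f_0=0$ for every $\vert t\vert\le\varepsilon$; in particular, if $\dim_0\Sigma f_0\ge 1$ then also $\dim_0\Sigma f_t\ge 1$.

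It then remains to distinguish two cases. If $\dim_0\Sigma f_0\ge 1$, then $\dim_0\Sigma f_t\ge 1$ for all small $t$, so Corollary~\ref{cor2} applies to $(f_t,f_0)$ as above and delivers $\dim_0\Sigma f_t=\dim_0\Sigma f_0$ together with $\lambda^k_{f_t,z}(0)=\lambda^k_{f_0,z}(0)$ for all $k$. If $\dim_0\Sigma f_0=0$, then $\dim_0\Sigma f_t=0$ for all small $t$, hence $\lambda^k_{f_t,z}(0)=\lambda^k_{f_0,z}(0)=0$ for $k\ge 1$, while $\lambda^0_{f_t,z}(0)=\mu_{f_t}(0)=\nu(f_t)=\nu(f_0)=\mu_{f_0}(0)=\lambda^0_{f_0,z}(0)$. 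In both cases the conclusion holds, uniformly for $\vert t\vert\le\varepsilon$.

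The argument is essentially formal once Theorem~\ref{mt} and Corollary~\ref{cor2} are in hand, since all the geometric substance is already contained in them. The one point that does not follow mechanically from Corollary~\ref{cor2} is the comparison of $\dim_0\Sigma f_t$ with $\dim_0\Sigma f_0$ --- a priori the critical locus could drop dimension for $t\ne 0$ --- and I expect this to be the sole (mild) obstacle. It is removed, as above, through the equivalence for non-degenerate functions between having a finite Newton number and having an isolated singularity, which, combined with the dimension equality already supplied by Corollary~\ref{cor2} in the positive-dimensional case, forbids any such drop.
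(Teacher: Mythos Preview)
Your proposal is correct and follows the same route as the paper, which does not give a separate proof of Corollary~\ref{cor3} but simply presents it as a restatement of Corollary~\ref{cor2} for families. Your argument is in fact more careful than the paper's: you explicitly treat the boundary case $\dim_0\Sigma f_0=0$, which lies outside the hypotheses of Corollary~\ref{cor2}.

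One small remark on that boundary case. The equivalence you use --- that a non-degenerate $h$ has an isolated singularity at $0$ if and only if $\nu(h)<\infty$ --- is true, but the references \cite{K1,BO} as you cite them primarily give the forward implication (isolated $\Rightarrow \mu=\nu<\infty$). For the converse, the paper's own proof of Corollary~\ref{cor2} (Section~\ref{pcor2}) uses a cleaner device that you could borrow directly: if $f_0$ is non-degenerate with an isolated singularity, then by \cite[Corollary~2.9]{BO} its support satisfies the Kouchnirenko condition; since this condition depends only on the Newton diagram, $\operatorname{supp} f_t$ satisfies it too, and then \cite[Theorem~3.1]{BO} forces $f_t$ to have an isolated singularity. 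This is the same mechanism that the paper uses to prove $\dim_0\Sigma f=\dim_0\Sigma g$ in the positive-dimensional case, so invoking it here keeps the argument uniform and avoids any question about the precise form of the $\nu<\infty$ criterion.
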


By combining Corollary \ref{cor3} with \cite[Theorem 9.4]{M} and \cite[Theorem 42]{FB}, we obtain a new proof of the following result, which is a special case of a much more general theorem of J. Damon \cite{D}.

\begin{wn}[Damon]
Let $\{f_t\}$ be a family as in Corollary \ref{cor3}, that is, such that for any sufficiently small $t$, the function $f_t$ is non-degenerate, $\Gamma(f_t)=\Gamma(f_0)$ and all the L\^e numbers $\lambda_{f_t,z}^k (0)$ are defined. Under these assumptions, the following two assertions hold true.
\begin{enumerate}
\item
If for all small $t$, the coordinates $z=(z_1,\ldots,z_n)$ are prepolar for $f_t$ and $\dim_0\Sigma f_t\leq n-4$, then the diffeomorphism type of the Milnor fibration of $f_t$ at $0$ is independent of $t$ for all small $t$.
\item
If $n\geq 5$ and $\dim_0\Sigma f_t=1$ for all small $t$, then the family $\{f_t\}$ is topologically trivial.
\end{enumerate}
\end{wn}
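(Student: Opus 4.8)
The plan is to deduce this from Corollary \ref{cor3} together with the two cited theorems, so the work is essentially organizational: check that the hypotheses of \cite[Theorem 9.4]{M} and \cite[Theorem 42]{FB} are met for the family $\{f_t\}$. First I would invoke Corollary \ref{cor3} to get the two conclusions $\dim_0\Sigma f_t=\dim_0\Sigma f_0$ (so the dimension of the critical locus is constant along the family) and $\lambda^k_{f_t,z}(0)=\lambda^k_{f_0,z}(0)$ for all small $t$ and all $k$. Thus the family has \emph{constant L\^e numbers} in the precise sense required by Massey's and Fern\'andez de Bobadilla's results, and the remaining task is simply to quote those results.

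For assertion (1), I would consider the family as a one-parameter deformation $F(z,t)=f_t(z)$ and note that the hypotheses --- $z$ prepolar for each $f_t$, $\dim_0\Sigma f_t\le n-4$, and constancy of all L\^e numbers --- are exactly what is needed to apply \cite[Theorem 9.4]{M}, which states that under these conditions the Milnor fibrations of $f_0$ and $f_t$ at $0$ are diffeomorphic for all small $t$. One should check the mild point that prepolarity together with constant L\^e numbers yields the ``good'' behaviour (no swelling of the polar cycles) that Massey's theorem requires; but this is already packaged into the statement of Theorem 9.4 once one knows the L\^e numbers are constant and the coordinates are prepolar, so no extra argument is needed. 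For assertion (2), with $n\ge 5$ and $\dim_0\Sigma f_t=1$ for all small $t$, Corollary \ref{cor3} again gives the constancy of the (now at most two) L\^e numbers $\lambda^0$ and $\lambda^1$, and then \cite[Theorem 42]{FB} --- which asserts precisely that a family of $1$-dimensional singularities in $\mathbb{C}^n$ with $n\ge 5$ and constant L\^e numbers is topologically trivial --- gives the conclusion directly.

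The only genuine subtlety, and the step I would spend most care on, is verifying that the \emph{a priori} hypotheses of the cited theorems are implied by (or compatible with) our standing assumptions: in particular that the notion of ``L\^e numbers of the family being constant'' used in \cite{M} and \cite{FB} coincides with the pointwise constancy $\lambda^k_{f_t,z}(0)=\lambda^k_{f_0,z}(0)$ delivered by Corollary \ref{cor3}, and that the choice of coordinates $z$ can be used uniformly (this is built into the statement of Corollary \ref{cor3}, where $z$ is fixed and prepolarity for all $f_t$ is assumed in part (1)). Once that bookkeeping is in place, the proof is immediate: apply Corollary \ref{cor3}, then apply \cite[Theorem 9.4]{M} for (1) and \cite[Theorem 42]{FB} for (2). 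I would write it in three or four sentences, since there is no new mathematical content beyond the combination of already-established results.
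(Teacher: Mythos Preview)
Your proposal is correct and follows exactly the same approach as the paper: invoke Corollary~\ref{cor3} to obtain constancy of the L\^e numbers, then apply \cite[Theorem 9.4]{M} for item~(1) and \cite[Theorem 42]{FB} for item~(2). The paper's own proof is in fact just two sentences and does not dwell on the compatibility checks you mention, so your instinct to write it very briefly is on target.
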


Indeed, by Corollary \ref{cor3}, the family $\{f_t\}$ has constant L\^e numbers with respect to the coordinates $z=(z_1,\ldots,z_n)$. Item (1) then follows from \cite[Theorem 9.4]{M} while item (2) is a consequence of \cite[Theorem 42]{FB}.

In fact, in \cite{D}, Damon obtains the topological triviality without the restrictions $n\geq 5$ or $\dim_0\Sigma f_t=1$.
A third proof (based on so-called ``uniform stable radius'') of item (2) for line singularities is also given in \cite{E}. 

Finally, combined with \cite[Theorem 3.3]{M}, Theorem \ref{mt} has the following corollary about the Euler characteristic of the Milnor fibre associated to a non-degenerate function.

\begin{wn}\label{mc}
Again, assume that $f$ is non-degenerate, $d:=\dim_0\Sigma f\geq 1$ and the L\^e numbers $\lambda^k_{f,z}(0)$ of $f$ at $0$ with respect to the coordinates $z=(z_1,\ldots,z_n)$ are defined for any $0\leq k\leq d$.
If furthermore the coordinates $z=(z_1,\ldots,z_n)$ are prepolar for $f$, then the reduced Euler characteristic $\widetilde{\chi}(F_{f,0})$ of the Milnor fibre $F_{f,0}$ of $f$ at $0$ is given by
\begin{equation*}
\widetilde{\chi}(F_{f,0})=(-1)^{n-1}(\nu_0(f_d)+(-1)^n),
\end{equation*}
where $f_d$ is defined by \eqref{lffq}.
\end{wn}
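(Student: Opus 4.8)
\textbf{Proof plan for Corollary \ref{mc}.}

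The plan is to combine Massey's formula for the reduced Euler characteristic of the Milnor fibre in terms of L\^e numbers (\cite[Theorem 3.3]{M}) with the explicit expressions for the L\^e numbers provided by item (2) of Theorem \ref{mt}. Recall that \cite[Theorem 3.3]{M} states that, under the assumption that the coordinates are prepolar for $f$ (so that the relevant L\^e numbers are defined), one has
\begin{equation*}
\widetilde{\chi}(F_{f,0})=\sum_{k=0}^{d}(-1)^{n-1-k}\lambda^k_{f,z}(0).
\end{equation*}
So the first step is simply to substitute into this alternating sum the three cases of Theorem \ref{mt}(2): the value $\lambda^0_{f,z}(0)=(-1)^n+\nu_0(f_d)+\widetilde{\nu}_1(f_d)$ for the $k=0$ term, the value $\lambda^k_{f,z}(0)=(-1)^{k-1}(\widetilde{\nu}_k(f_d)-\widetilde{\nu}_{k+1}(f_d))$ for $1\le k\le d-1$, and the value $\lambda^d_{f,z}(0)=(-1)^{d-1}\widetilde{\nu}_d(f_d)$ for the top term.

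The second step is the bookkeeping that makes the $\widetilde{\nu}_k(f_d)$-contributions collapse. Each intermediate L\^e number $\lambda^k_{f,z}(0)$, $1\le k\le d-1$, enters the sum with sign $(-1)^{n-1-k}$, and itself carries a sign $(-1)^{k-1}$, so the product of signs is $(-1)^{n-1-k}(-1)^{k-1}=(-1)^{n-2}=(-1)^n$, independent of $k$; the same sign $(-1)^n$ multiplies the $\widetilde{\nu}_d(f_d)$ coming from $\lambda^d_{f,z}(0)$ (whose sign is $(-1)^{n-1-d}(-1)^{d-1}=(-1)^{n-2}$) and the $\widetilde{\nu}_1(f_d)$ coming from $\lambda^0_{f,z}(0)$ (whose sign is $(-1)^{n-1}$... one must be slightly careful here). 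Writing everything out, the coefficient of $\widetilde{\nu}_j(f_d)$ for $2\le j\le d-1$ is, up to the common factor, the difference of the contribution of $-\widetilde{\nu}_j$ from the $\lambda^{j-1}$ term and of $+\widetilde{\nu}_j$ from the $\lambda^{j}$ term, which cancels; similarly the contributions at the two ends cancel against the $\widetilde{\nu}_1$ and $\widetilde{\nu}_d$ terms. Hence all $\widetilde{\nu}_k(f_d)$ disappear, and what survives is $\widetilde{\chi}(F_{f,0})=(-1)^{n-1}((-1)^n+\nu_0(f_d))$, which is the claimed formula. One should also treat the degenerate case $d=1$ separately (there are no ``intermediate'' indices), checking that the formula still holds: there $\widetilde{\chi}(F_{f,0})=(-1)^{n-1}\lambda^0+(-1)^{n-2}\lambda^1=(-1)^{n-1}((-1)^n+\nu_0(f_d)+\widetilde{\nu}_1(f_d))+(-1)^{n}\widetilde{\nu}_1(f_d)$, and the two $\widetilde{\nu}_1(f_d)$ terms cancel.

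I expect essentially no serious obstacle here: the corollary is a formal consequence of the cited theorem of Massey and of our main theorem, and the only thing to watch is the sign arithmetic in the telescoping sum (in particular, being consistent about whether $d\ge 2$ or $d=1$, and making sure the prepolarity hypothesis is exactly what is needed to invoke both \cite[Theorem 3.3]{M} and Theorem \ref{mt}). The ``hard part'', such as it is, amounts to carefully verifying the claimed sign $(-1)^{n-1-k}$ in Massey's Euler characteristic formula and confirming that the ranges of summation match, so that no boundary term is dropped; once that is pinned down, the cancellation is mechanical.
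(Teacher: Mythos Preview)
Your proposal is correct and follows exactly the same approach as the paper: invoke Massey's formula $\widetilde{\chi}(F_{f,0})=\sum_{k=0}^{d}(-1)^{n-1-k}\lambda^k_{f,z}(0)$ from \cite[Theorem 3.3]{M} and substitute the expressions from Theorem~\ref{mt}(2), after which the $\widetilde{\nu}_k(f_d)$ terms telescope away. The paper states this in one line without writing out the telescoping, so your more explicit sign bookkeeping (including the separate $d=1$ check) is fine and even a bit more detailed than what the authors give.
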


Indeed, by \cite[Theorem 3.3]{M}, we have
\begin{equation*}
\widetilde{\chi}(F_{f,0})=\sum_{k=0}^d (-1)^{n-1-k}\lambda^k_{f,z}(0).
\end{equation*}
Thus, to get the formula in Corollary \ref{mc}, it suffices to replace $\lambda^k_{f,z}(0)$ by its expression in terms of the modified Newton numbers given in   Theorem~\ref{mt}.

\section{Example}\label{sect-ex}

Consider the homogeneous polynomial function 
\begin{equation*}
f(z_1,z_2,z_3):=z_1^2z_2^2+z_2^4+z_3^4.
\end{equation*}
The Newton diagram $\Gamma(f)$ of $f$ is nothing but the triangle in $\mathbb{R}^3_+$ (with coordinates $(x_1,x_2,x_3)$) defined by the vertices $A=(2,2,0)$, $B=(0,4,0)$ and $C=(0,0,4)$ (see Figure \ref{Fig1}). We easily check that $f$ is non-degenerate. The critical locus $\Sigma f$ of $f$ is given by the $z_1$-axis, and the restriction of $f$ to the hyperplane $V(z_1)$ defined by $z_1=0$ has an isolated singularity at $0$. In other words, $f$ has a \emph{line singularity} at $0$ in the sense of \cite[\S 4]{M7}. Then, by \cite[Remark 1.29]{M}, the partition of $V(f):=f^{-1}(0)$ given by
\begin{equation*}
\mathscr{S}:=\{V(f)\setminus \Sigma f, \Sigma f\setminus\{0\},\{0\}\}
\end{equation*}
is a ``good stratification'' for $f$ in a neighbourhood of $0$, and the hyperplane $V(z_1)$ is a ``prepolar slice'' for $f$ at $0$ with respect to $\mathscr{S}$ (see \cite[Definitions 1.24 and 1.26]{M}). In other words, the coordinates $z=(z_1,z_2,z_3)$ are prepolar for $f$. In particular, combined with \cite[Proposition 1.23]{M}, this implies that the L\^e numbers $\lambda^0_{f,z}(0)$ and $\lambda^1_{f,z}(0)$ are defined.
We can compute these numbers either using the definition or by applying Theorem \ref{mt}. 

\begin{figure}[t]
\includegraphics[width=16cm,height=5cm]{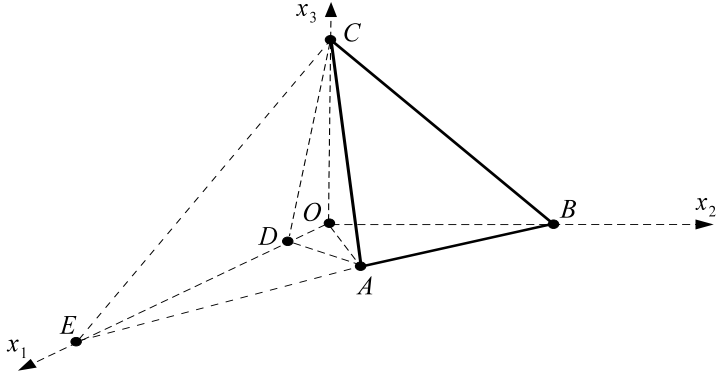}
\caption{Newton diagrams of $f$ and $f_1$}
\label{Fig1}
\end{figure}

\subsection{Calculation using the definition} 
We need to compute the polar varieties $\Gamma^2_{f,z}$ and $\Gamma^1_{f,z}$ and the L\^e cycles $[\Lambda^1_{f,z}]$ and $[\Lambda^0_{f,z}]$. By definition,
\begin{flalign*}
\qquad\Gamma^2_{f,z} = V\bigg(\frac{\partial f}{\partial z_3}\bigg)\lnot V(z_2,z_3)
= V(z_3^3)\lnot V(z_2,z_3)
= V(z_3^3),&&
\end{flalign*}
while
\begin{flalign*}
\qquad\Gamma^1_{f,z} &= V\bigg(\frac{\partial f}{\partial z_2}, \frac{\partial f}{\partial z_3}\bigg)\lnot V(z_2,z_3)\\
&= V(z_2(2z_1^2+4z_2^2),z_3^3)\lnot V(z_2,z_3)
= V(2z_1^2+4z_2^2,z_3^3).&&
\end{flalign*}
It follows that the L\^e cycles are given by
\begin{flalign*}
\qquad[\Lambda^1_{f,z}] &= \bigg[\Gamma^2_{f,z}\cap V\bigg(\frac{\partial f}{\partial z_2}\bigg)\bigg] - \bigg[\Gamma^1_{f,z}\bigg]\\
&= [V(z_3^3)\cap V(z_2(2z_1^2+4z_2^2))] - [V(2z_1^2+4z_2^2,z_3^3)]\\
&= [V(z_2,z_3^3)]&&
\end{flalign*}
and
\begin{flalign*}
\qquad[\Lambda^0_{f,z}] &= \bigg[\Gamma^1_{f,z}\cap V\bigg(\frac{\partial f}{\partial z_1}\bigg)\bigg] \\
&= [V(2z_1^2+4z_2^2,z_3^3)\cap V(z_1z_2^2)] \\
&=[V(z_1,z_2^2,z_3^3)]+[V(z_1^2,z_2^2,z_3^3)].&&
\end{flalign*}
Finally the L\^e numbers are given by
\begin{flalign*}
\qquad\lambda^1_{f,z} &= ([\Lambda^1_{f,z}]\cdot [V(z_1)])_0=[V(z_1,z_2,z_3^3)]_0=3;\\
\qquad\lambda^0_{f,z} &= ([\Lambda^0_{f,z}]\cdot \mathbb{C}^3)_0= 6+12=18.&&
\end{flalign*}

\subsection{Calculation using Theorem \ref{mt}}
Consider a polynomial function 
\begin{equation*}
f_1(z_1,z_2,z_3):=f(z_1,z_2,z_3)+z_1^{\alpha_1}
\end{equation*} 
such that $\alpha_1>\max\{2,\rho_{f,z}(0),m(f)\}$. Since $f$ is a homogeneous polynomial of degree $4$, the maximum polar ratio $\rho_{f,z}(0)$ for $f$ at $0$ with respect to the coordinates $z$ is $4$ (see Section \ref{sect-pr}), and clearly, we can take $m(f)=4$. So, let us take for instance $\alpha_1=5$. Clearly, $\Gamma_{\! -}(f_1)$ is the union of two tetrahedra $\{O,A,C,E\}$ and $\{O,A,B,C\}$. For each subset $I \subseteq \{1,2,3\}$, take the ``natural'' simplicial decomposition $\Xi_I$ of $\Gamma_{\! -}(f_1)^I$ generated by the vertices of the set $\{O,A,B,C,E\}\cap (\mathbb{R}^n_+)^I$ as suggested in Figure \ref{Fig1} (at this level, we ignore the point $D$ mentioned in the figure). For example, $\Xi_{\{1,2\}}$ is defined by the simplexes $\{O,A,E\}$ and $\{O,A,B\}$.
By Theorem \ref{mt},
\begin{equation*}
\lambda^0_{f,z}(0)=(-1)^3+\nu_0(f_1)+\widetilde{\nu}_1(f_1)
\quad\mbox{and}\quad
\lambda^1_{f,z}(0)=(-1)^0\widetilde{\nu}_1(f_1).
\end{equation*}
The data to compute the modified Newton numbers $\nu_0(f_1)$ and $\widetilde{\nu}_1(f_1)$ are given in Table \ref{data}. In this table, $O=(0,0,0)$, $D=(1,0,0)$, and $A$, $B$, $C$ are as above. Each pair in the third and fourth columns of the table consists of a simplex together with its volume. For example, in the first row of the third column, the pair $(\{O,D\};1)$ consists of the simplex $\{O,D\}\in\widetilde{\Xi}_{\{1\},\{1\},1}$ and its volume $\Vol_1(\{O,D\})=1$. The calculation shows that $\nu_0(f_1)=16$ and $\widetilde{\nu}_1(f_1)=3$, and therefore the L\^e numbers are given by
\begin{equation*}
\lambda^0_{f,z}(0)=18
\quad\mbox{and}\quad
\lambda^1_{f,z}(0)=3.
\end{equation*}

\begin{table}[b]
\centering\vskip 8mm
\onehalfspacing 
\begin{tabular}{|c|c|c|c|}
\hline
$I$ 
& $(-1)^{3-|I|}|I|!$ 
& $(S\in\widetilde{\Xi}_{I,\{1\},1} ; \mbox{Vol}_{|I|}(S))$ 
& $(S\in\Xi_{I,\{1\},0} ; \mbox{Vol}_{|I|}(S))$ \\
\hline
$\{1\}$     & $1$  & $(\{O,D\};1)$               & $(\emptyset;0)$\\
$\{2\}$     & $1$  & $(\emptyset;0)$             & $(\{O,B\};4)$\\
$\{3\}$     & $1$  & $(\emptyset;0)$             & $(\{O,C\};4)$\\
$\{1,2\}$   & $-2$ & $(\{O,A,D\};1)$             & $(\{O,A,B\};4)$\\
$\{1,3\}$   & $-2$ & $(\{O,C,D\};2)$             & $(\emptyset;0)$\\
$\{2,3\}$   & $-2$ & $(\emptyset;0)$             & $(\{O,B,C\};8)$\\
$\{1,2,3\}$ & $6$  & $(\{O,A,C,D\};\frac{4}{3})$ & $(\{O,A,B,C\};\frac{16}{3})$\\
\hline
\end{tabular}
\caption{\label{data}Data to compute $\nu_0(f_1)$ and $\widetilde{\nu}_1(f_1)$}
\end{table}

\subsection{Euler characteristic}
Since the coordinates $z=(z_1,z_2,z_3)$ are prepolar for $f$, Corollary \ref{mc} says that to calculate the reduced Euler characteristic $\widetilde{\chi}(F_{f,0})$ of the Milnor fibre $F_{f,0}$ of $f$ at~$0$, it suffices to compute the special modified Newton number $\nu_0(f_1)$. Precisely, $\widetilde{\chi}(F_{f,0})$ is given by
\begin{equation*}
\widetilde{\chi}(F_{f,0})=(-1)^{2}(\nu_0(f_1)+(-1)^3)=15.
\end{equation*}

\section{Proof of Theorem \ref{mt}}\label{pmt}

Applying the Iomdine-L\^e-Massey formula (see \cite[Theorem 4.5]{M}) successively to $f,f_1,\ldots,f_{d-1}$ shows that for any $0\leq q\leq d-1$:
\begin{enumerate}
\item
$\Sigma f_{q+1}=\Sigma f\cap V(z_1,\ldots,z_{q+1})$ in a neighbourhood of the origin;\vskip 1mm
\item
$\dim_0\Sigma f_{q+1}=d-(q+1)$;\vskip 1mm
\item
the L\^e numbers $\lambda^k_{f_{q+1},z^{(q+1)}}(0)$ of $f_{q+1}$  at $0$ with respect to the rotated coordinates 
\begin{align*}
z^{(q+1)}=(z_{q+2},\ldots,z_n,z_1,\ldots,z_{q+1})
\end{align*}
 exist for all $0\leq k\leq d-(q+1)$ and are given by
\begin{align*}
\begin{cases}
\lambda ^0_{f_{q+1},z^{(q+1)}}(0)=\lambda^0_{f_{q},z^{(q)}}(0)+(\alpha_{q+1}-1)\lambda^1_{f_{q},z^{(q)}}(0);\\
\lambda^k_{f_{q+1},z^{(q+1)}}(0)=(\alpha_{q+1}-1)\lambda^{k+1}_{f_{q},z^{(q)}}(0) \quad \mbox{for}\quad  1\leq k\leq d-(q+1);
\end{cases}
\end{align*}
\end{enumerate}
where $\lambda^{k}_{f_{q},z^{(q)}}(0)$ is the $k$th L\^e number of $f_q$ at $0$ with respect to the rotated coordinates 
\begin{equation*}
z^{(q)}=(z_{q+1},\ldots,z_n,z_1,\ldots,z_{q}),
\end{equation*} 
and where $\alpha_{q+1}$ is an integer satisfying
\begin{equation*}
\alpha_{q+1} > \max \{2,\rho_{f_{q},z^{(q)}}(0),m(f_{q})\}.
\end{equation*}
In particular (see \cite[Corollary 4.6]{M}) $f_d$ has an isolated singularity at~$0$ and its Milnor number $\mu_{f_d}(0)$ (which, in this case, coincides with its $0$th L\^e number $\lambda^0_{f_{d},z^{(d)}}(0)$) is given by
\begin{equation}\label{fmnfd}
\mu_{f_d}(0)=\lambda^0 _{f,z}(0)+\sum_{k=1}^d\left(\prod_{q=1}^k (\alpha_q-1)\right) \lambda^k _{f,z}(0).
\end{equation}

Let $\{i_1,\ldots,i_p\}$ be the subset of $\{1,\ldots,n\}\setminus\{1,\ldots,d\}$ consisting of all indices $i$ for which $\Gamma(f_d)$ does not meet the $i$th coordinate axis of $\mathbb{R}^n_+$.
Then, by \cite[Lemmas 3.6--3.8 and Corollary 3.11]{BO} and \cite[Th\'eor\`eme~I]{K1}, for any $0\ll \alpha_1\ll \cdots\ll \alpha_d\ll \alpha_{i_1}\ll\cdots\ll \alpha_{i_p}$ sufficiently large, the function 
\begin{align*}
f'_d(z) & := \underbrace{f(z)+z_{1}^{\alpha_{1}}+\cdots+z_{d}^{\alpha_{d}}}_{f_d(z)}
+z_{i_1}^{\alpha_{i_1}}+\cdots+z_{i_p}^{\alpha_{i_p}}
\end{align*}
is non-degenerate, convenient, and the following equalities hold true:
\begin{equation}\label{newton}
\mu_{f_d}(0)=\mu_{f'_d}(0)=\nu(f'_d)=\nu(f_d).
\end{equation}

The expression \eqref{fmnfd} for the Milnor number $\mu_{f_d}(0)$ can be viewed as a polynomial in the variables $\alpha_1,\ldots,\alpha_d$. Its linear part is given by
\begin{equation}\label{lpmn}
\sum_{k=0}^d (-1)^k \lambda ^k _{f,z}(0) + 
\sum_{i=1}^d \bigg( \alpha_i \sum_{k=i}^d (-1)^{k-1} \lambda ^k _{f,z}(0) \bigg).
\end{equation} 

Now we need the following lemma.

\begin{lem}\label{lemma-nm}
The function $f$ has no term of the form $c_1z_1^{a_1},\ldots,c_dz_d^{a_d}$, where $c_i\in\mathbb{C}\setminus \{0\}$, $a_i\in\mathbb{Z}_{>0}$.
\end{lem}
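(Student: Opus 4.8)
The plan is to derive this from the hypothesis that $d=\dim_0\Sigma f\geq 1$, i.e.\ that the critical locus of $f$ has positive dimension at the origin. Suppose, for contradiction, that $f$ does contain a monomial $c_{i}z_i^{a_i}$ with $c_i\neq 0$ and $a_i\geq 1$, for some $i\in\{1,\ldots,d\}$. The key point is that $i\leq d$, so the $i$th coordinate axis is one of the ``thickened'' directions; I want to show that this forces the $i$th coordinate axis not to lie in $\Sigma f$, and more generally cuts down the dimension of the critical locus, contradicting $\dim_0\Sigma f = d$ via the Iomdine--L\^e--Massey analysis already invoked.

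First I would pin down the role of the index range $\{1,\ldots,d\}$. Since the L\^e numbers $\lambda^k_{f,z}(0)$ are assumed to exist for $0\leq k\leq d$ with respect to the coordinates $z=(z_1,\ldots,z_n)$, and since (as recalled right after Theorem~\ref{mt} and used at the start of this section) the Iomdine--L\^e--Massey formula applies successively to $f,f_1,\ldots,f_{d-1}$ to produce $\dim_0\Sigma f_{q+1}=d-(q+1)$, the coordinates $z_1,\ldots,z_d$ are precisely the ones being successively added as powers $z_p^{\alpha_p}$. The crucial structural fact is item~(1) of the list at the beginning of Section~\ref{pmt}: $\Sigma f_{q+1}=\Sigma f\cap V(z_1,\ldots,z_{q+1})$ near $0$. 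In particular $\Sigma f_d = \Sigma f\cap V(z_1,\ldots,z_d)$, and $f_d$ has an isolated singularity at $0$. If $f$ contained the pure power $c_iz_i^{a_i}$ with $i\leq d$, then $f$ and $f_d$ agree up to the other added pure powers, and I would argue that the Newton diagram $\Gamma(f_d)$ already meets the $i$th axis (the exponent is $\min\{a_i,\alpha_i\}$), so that adding $z_i^{\alpha_i}$ is redundant in that direction.

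The cleanest route, though, is to go through the Newton number identity \eqref{newton}: $\mu_{f_d}(0)=\nu(f_d)$, together with the explicit polynomial expression \eqref{fmnfd} for $\mu_{f_d}(0)$ as a function of $\alpha_1,\ldots,\alpha_d$. If $f$ has a term $c_iz_i^{a_i}$ with $i\in\{1,\ldots,d\}$, then for $\alpha_i > a_i$ the added monomial $z_i^{\alpha_i}$ does not change the Newton diagram of $f_{i-1}+z_i^{\alpha_i}$ along the $i$th axis at all: $\Gamma(f+z_1^{\alpha_1}+\cdots+z_i^{\alpha_i})=\Gamma(f+z_1^{\alpha_1}+\cdots+z_{i-1}^{\alpha_{i-1}})$ in a neighbourhood of the $i$th axis, hence (by the non-degeneracy hypothesis and Kouchnirenko's theorem applied to both) the Milnor number $\mu_{f_i}(0)$ would then be \emph{independent of $\alpha_i$} for all large $\alpha_i$. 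But by \eqref{fmnfd} applied with $q$ running through $i$, the coefficient of $\alpha_i$ in $\mu_{f_d}(0)$ is $\big(\prod_{q=1}^{i-1}(\alpha_q-1)\big)\sum_{k=i}^d\big(\prod_{q=i+1}^k(\alpha_q-1)\big)\lambda^k_{f,z}(0)$, and in particular $\mu_{f_i}(0)=\lambda^0_{f,z}(0)+\sum_{k=1}^i\big(\prod_{q=1}^k(\alpha_q-1)\big)\lambda^k_{f,z}(0)$ has leading $\alpha_i$-coefficient $\big(\prod_{q=1}^{i-1}(\alpha_q-1)\big)\lambda^i_{f,z}(0)$. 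For this to vanish (so that $\mu_{f_i}(0)$ is eventually constant in $\alpha_i$) one needs $\lambda^i_{f,z}(0)=0$, and then the Iomdine--L\^e--Massey formulas force $\lambda^k_{f,z}(0)=0$ for all $k\geq i$, in particular $\lambda^d_{f,z}(0)=0$; but $\lambda^d_{f,z}(0)>0$ whenever $\dim_0\Sigma f=d$ (the top L\^e number is positive on each $d$-dimensional component of $\Sigma f$ through $0$). This contradiction completes the argument, and repeating it for each $i\in\{1,\ldots,d\}$ gives the lemma.

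I expect the main obstacle to be the first reduction: making rigorous the claim that adding $z_i^{\alpha_i}$ with $\alpha_i$ large leaves the local geometry near the $i$th axis (and hence the relevant Milnor number) unchanged when $f$ already has a term $c_iz_i^{a_i}$. One must be careful that "near the $i$th axis" is exactly where the would-be contribution to $\dim_0\Sigma f$ lives, since $\Sigma f_{i-1}$ near $0$ is, by item~(1), $\Sigma f\cap V(z_1,\ldots,z_{i-1})$, which contains the $i$th axis as a component precisely when the $i$th axis lies in $\Sigma f$. An alternative, possibly shorter, obstacle-free route is purely local: the presence of $c_iz_i^{a_i}$ in $f$ (with $i\leq d$) would mean $\partial f/\partial z_i$ contains the nonzero pure term $a_ic_iz_i^{a_i-1}$; combined with the fact that all other partials of $f$ vanish on the $i$th axis would already be enough to conclude the $i$th axis is not in $\Sigma f$, and then one checks this is incompatible with the way $z_i^{\alpha_i}$ is used in the Iomdine--L\^e--Massey induction. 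I would develop whichever of the two is cleanest once the bookkeeping of indices is fixed.
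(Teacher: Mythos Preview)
Your main idea---that the presence of a pure power $c_i z_i^{a_i}$ with $i\leq d$ would make $\Gamma(f_d)$, hence $\nu(f_d)$, and hence (via \eqref{newton}) $\mu_{f_d}(0)$, independent of $\alpha_i$ for $\alpha_i>a_i$, contradicting the polynomial expression \eqref{fmnfd}---is sound and genuinely different from the paper's route. The paper does not compare polynomial identities at all; instead it removes $z_1^{b_1}$ from $g=f+z_1^{b_1}+\cdots+z_d^{b_d}$, observes that the Newton diagram is unchanged (since $a_1<b_1$), transfers the Kouchnirenko condition from $\supp g$ to $\supp g'$ via \cite[Corollary 2.9 and Theorem 3.1]{BO}, concludes $g'$ has an isolated singularity, and then gets a dimension count contradiction from $\dim_0 Z_f=\dim_0 Z_{g'}\leq d-1$ while $\Sigma f\subseteq Z_f$ has dimension $d$. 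Your approach avoids the BO machinery and instead leverages \eqref{newton} (which is already established before Lemma~\ref{lemma-nm}) together with the positivity of the top L\^e number; that is a legitimate and arguably more transparent alternative.

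However, as written there is a real gap. You repeatedly invoke $\mu_{f_i}(0)$ and apply Kouchnirenko's theorem to $f_i$, but for $i<d$ the function $f_i$ has $\dim_0\Sigma f_i=d-i>0$, so its Milnor number does not exist and Kouchnirenko's theorem does not apply. The displayed formula you write for ``$\mu_{f_i}(0)$'' is actually the $0$th L\^e number $\lambda^0_{f_i,z^{(i)}}(0)$, not a Milnor number. The fix is simple: run the whole argument with $f_d$, not $f_i$. Since $c_iz_i^{a_i}$ already lies in $\supp f$, the Newton diagram $\Gamma(f_d)$ is literally independent of $\alpha_i$ once $\alpha_i>a_i$; hence so is $\nu(f_d)$; hence, by \eqref{newton} and Lemma~\ref{lemma-ul}, the polynomial \eqref{fmnfd} is identically independent of $\alpha_i$. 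Its $\alpha_i$-coefficient $\sum_{k=i}^d\bigl(\prod_{q\leq k,\,q\neq i}(\alpha_q-1)\bigr)\lambda^k_{f,z}(0)$ therefore vanishes as a polynomial in the remaining $\alpha_q$'s, and reading off the top-degree term gives $\lambda^d_{f,z}(0)=0$ directly---you do not need the intermediate claim that ``the Iomdine--L\^e--Massey formulas force $\lambda^k_{f,z}(0)=0$ for all $k\geq i$'' (which, as stated, is not what those formulas say). Finally, you should cite the standard fact from Massey's theory that the top L\^e cycle is supported on the $d$-dimensional components of $\Sigma f$ with strictly positive multiplicities, so $\lambda^d_{f,z}(0)>0$ whenever $\dim_0\Sigma f=d$ and the number is defined. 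Your ``alternative'' local route (showing the $i$th axis is not in $\Sigma f$) does not lead anywhere: nothing in the hypotheses forces the $i$th coordinate axis to lie in $\Sigma f$, so its absence is not a contradiction.
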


We postpone the proof of this lemma to the end of this section, and we first complete the proof of Theorem \ref{mt}.

Since $f$ has no term of the form $c_1z_1^{a_1},\ldots,c_dz_d^{a_d}$, the Newton number $\nu (f'_d)$ can be viewed as a polynomial in the variables $\alpha_1,\ldots,\alpha_d$ and $\alpha_{i_1},\ldots,\alpha_{i_p}$. Its linear part with respect to $\alpha_1,\ldots,\alpha_d$ has the form
\begin{equation}\label{lpnn}
P_0(\alpha_{i_1},\ldots,\alpha_{i_p})+\alpha_1\, P_1(\alpha_{i_1},\ldots,\alpha_{i_p}) +\cdots
+\alpha_d\, P_d(\alpha_{i_1},\ldots,\alpha_{i_p}),
\end{equation}
where $P_i(\alpha_{i_1},\ldots,\alpha_{i_p})$ are polynomials in $\alpha_{i_1},\ldots,\alpha_{i_p}$.
Taking the difference $\mu_{f_d}(0)-\nu (f'_d)$ gives a polynomial 
\begin{equation*}
Q(\alpha_1,\ldots,\alpha_d,\alpha_{i_1},\ldots,\alpha_{i_p}):=\mu_{f_d}(0)-\nu (f'_d)
\end{equation*} 
in the variables $\alpha_1,\ldots,\alpha_d,\alpha_{i_1},\ldots,\alpha_{i_p}$. Then it follows from \eqref{newton} that for any $0\ll \alpha_1\ll \cdots\ll \alpha_d\ll \alpha_{i_1}\ll\cdots\ll \alpha_{i_p}$ sufficiently large (equivalently,  for any $(\alpha_1,\ldots,\alpha_d,\alpha_{i_1},\ldots,\alpha_{i_p})$ in the set $Z(d+p)$ which appears in Lemma~\ref{lemma-ul} of the appendix, with the appropriate coefficients $c_1$ and $c_\ell(\alpha_1,\ldots,\alpha_{\ell-1})$ for $2\leq \ell\leq d+p$), we have
\begin{equation*}
Q(\alpha_1,\ldots,\alpha_d,\alpha_{i_1},\ldots,\alpha_{i_p})=0.
\end{equation*}
Thus applying Lemma \ref{lemma-ul} shows that $Q$ identically vanishes.
In particular, comparing the coefficients of the linear parts \eqref{lpmn} and \eqref{lpnn} of $\mu_{f_d}(0)$ and $\nu (f'_d)$, respectively, shows that
the polynomials $P_i:=P_i(\alpha_{i_1},\ldots,\alpha_{i_p})$ are independent of $\alpha_{i_1},\ldots,\alpha_{i_p}$ (i.e., $P_i$ is constant) and are given by
\begin{equation*}
\left\{
\begin{aligned}
P_0 & = \sum_{k=0}^d (-1)^k \lambda ^k _{f,z}(0);\\
P_i & = \sum_{k=i}^d (-1)^{k-1} \lambda^k _{f,z}(0) \quad \mbox{for}\quad  1\leq i\leq d.\\
\end{aligned}
\right.
\end{equation*}
Theorem \ref{mt} is now an immediate consequence of the following lemma.

\begin{lem}\label{fl}
For each non-empty subset $I\subseteq\{1,\ldots,n\}$, choose a simplicial decomposition 
\begin{equation*}
{\Xi}'_I:=\{{S}'_{I,r}\}_{1\leq r\leq r'_I}
\end{equation*}
of $\Gamma_{\! -}({f}'_d)^I$ as in Section \ref{sect-rnn} such that its restriction to $\Gamma_{\! -}(f_d)^I$ coincides with the simplicial decomposition $\Xi_I$.  (We can always achieve this condition by taking $\alpha_{i_1},\ldots,\alpha_{i_p}$ sufficiently large.)
Write $\Xi':=\{{\Xi}'_I\}_{I\subseteq\{1,\ldots,n\},\, I\not=\emptyset}$ and set $J:=\{1,\ldots,d,i_1,\ldots,i_p\}$. Then the following equalities hold true:
\begin{equation*}
\left\{
\begin{aligned}
& P_{i_0}=\widetilde{\nu}_{\Xi',J,i_0}(f'_d)=\widetilde{\nu}_{i_0}(f_d) 
\mbox{ for } 1\leq i_0\leq d;\\
& P_{0}=\nu_{\Xi',J,0}(f'_d)+(-1)^n=\nu_{0}(f_d)+(-1)^n.\\
\end{aligned}
\right.
\end{equation*}
\end{lem}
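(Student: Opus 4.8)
The plan is to prove Lemma~\ref{fl} by a direct combinatorial analysis of the Newton number $\nu(f'_d)$ viewed as a polynomial in $\alpha_1,\ldots,\alpha_d$, tracking exactly which simplexes in the triangulation contribute to the coefficient of each $\alpha_{i_0}$. First I would fix the simplicial decomposition $\Xi'_I$ of $\Gamma_{\!-}(f'_d)^I$ refining $\Xi_I$ as in the statement, and expand $\nu(f'_d)=\sum_{I}(-1)^{n-|I|}|I|!\,\Vol_{|I|}(\Gamma_{\!-}(f'_d)^I)$ using \eqref{rel-vss}, so that $\nu(f'_d)=\sum_I\sum_{r}(-1)^{n-|I|}|I|!\,\Vol_{|I|}(S'_{I,r})$, the inner sum being over maximal-dimensional simplexes. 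The key observation is that, by Lemma~\ref{lemma-nm}, $f$ has no pure power $z_{i}^{a_i}$ for $i\in\{1,\ldots,d\}$, so the \emph{only} vertex of $\Gamma(f'_d)$ on the $i_0$th coordinate axis (for $i_0\le d$) is the point $(0,\ldots,0,\alpha_{i_0},0,\ldots,0)$ coming from the added monomial $z_{i_0}^{\alpha_{i_0}}$. Hence a simplex $S'_{I,r}$ depends on $\alpha_{i_0}$ precisely when it has this axis point as one of its vertices, i.e.\ when $S'^{\{i_0\}}_{I,r}$ is an edge; and because $\alpha_{i_0}$ is taken much larger than $\alpha_1,\ldots,\alpha_{i_0-1}$ but much smaller than $\alpha_{i_0+1},\ldots$, the relevant simplexes are exactly those for which $S'^{\{i\}}_{I,r}$ is an edge for $i=i_0$ and for no other $i\in J$ — this is precisely the defining condition of $\Xi'_{I,J,i_0}$.

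Next I would compute the coefficient of $\alpha_{i_0}$ in $\nu(f'_d)$. Using the determinant formula \eqref{volsimplex} for $\Vol_{|I|}(S'_{I,r})$, expanding the determinant along the column corresponding to the axis vertex $(0,\ldots,0,\alpha_{i_0},0,\ldots,0)$ shows that $\Vol_{|I|}(S'_{I,r})$ is an affine (degree one) function of $\alpha_{i_0}$, whose $\alpha_{i_0}$-linear coefficient equals $\pm\frac{1}{|I|!}$ times the corresponding $(|I|)\times(|I|)$ minor — and that minor is, up to sign, exactly $|I|!$ times the volume of the reduced simplex $\widetilde{S}'_{I,r}$ obtained by replacing the axis vertex by $(0,\ldots,0,1,0,\ldots,0)$ (the $\alpha_{i_0}$ scales linearly out of that row). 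Therefore the coefficient of $\alpha_{i_0}$ in $\nu(f'_d)$ is $\sum_{I\ni i_0}\sum_{\widetilde{S}'_{I,r}\in\widetilde{\Xi}'_{I,J,i_0}}(-1)^{n-|I|}|I|!\,\Vol_{|I|}(\widetilde{S}'_{I,r})$, which is by Definition~\ref{def-mnn} precisely $\widetilde{\nu}_{\Xi',J,i_0}(f'_d)$. Since we already established (just before the lemma, by comparing \eqref{lpmn} and \eqref{lpnn}) that this coefficient equals $P_{i_0}$, the first equality $P_{i_0}=\widetilde{\nu}_{\Xi',J,i_0}(f'_d)$ follows. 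For the constant term $P_0$, the simplexes that contribute to it are exactly the maximal-dimensional $S'_{I,r}$ for which $S'^{\{i\}}_{I,r}$ is not an edge for any $i\in J$ (these are the $\alpha$-independent simplexes), giving $\nu_{\Xi',J,0}(f'_d)$ by Definition~\ref{def-smnn}; the extra $(-1)^n$ is the $I=\emptyset$ term $\Vol_0(\Gamma_{\!-}(f'_d)^\emptyset)=1$ in \eqref{KNN}, which is present in $\nu(f'_d)$ but not in $\nu_{\Xi',J,0}$. This yields $P_0=\nu_{\Xi',J,0}(f'_d)+(-1)^n$.

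It remains to pass from $f'_d$ to $f_d$, i.e.\ to show $\widetilde{\nu}_{\Xi',J,i_0}(f'_d)=\widetilde{\nu}_{i_0}(f_d)$ and $\nu_{\Xi',J,0}(f'_d)=\nu_0(f_d)$. Here I would argue that adding the extra pure powers $z_{i_1}^{\alpha_{i_1}},\ldots,z_{i_p}^{\alpha_{i_p}}$ with $i_1,\ldots,i_p\notin\{1,\ldots,d\}$ does not affect any of the data entering these particular modified Newton numbers: for $i_0\le d$, a simplex in $\Xi'_{I,J,i_0}$ must have $S'^{\{i\}}_{I,r}$ \emph{not} an edge for every $i\in J\setminus\{i_0\}$, in particular for $i=i_1,\ldots,i_p$, so it uses none of the new axis vertices and therefore already lies in $\Gamma_{\!-}(f_d)^I$; since $\Xi'_I$ restricts to $\Xi_I$ on $\Gamma_{\!-}(f_d)^I$, such simplexes are exactly the simplexes of $\Xi_{I,\{1,\ldots,d\},i_0}$, and the reduced simplexes and their volumes coincide. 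The same reasoning applies to $\Xi'_{I,J,0}$ versus $\Xi_{I,\{1,\ldots,d\},0}$. Combining everything with the formulas for $P_0$ and $P_i$ recalled just above the lemma gives the claimed chain of equalities, and hence Theorem~\ref{mt} (part (2)); part (1), the independence of $\Xi$, then follows because $P_0,\ldots,P_d$ are intrinsic (they are coefficients of the polynomial $\mu_{f_d}(0)$, which depends only on $f$). I expect the main obstacle to be the bookkeeping in the second paragraph: carefully justifying, via the ordering $\alpha_1\ll\cdots\ll\alpha_d\ll\alpha_{i_1}\ll\cdots\ll\alpha_{i_p}$, that ``contributes to the coefficient of $\alpha_{i_0}$'' is equivalent to the precise edge condition defining $\Xi'_{I,J,i_0}$ (in particular ruling out cancellations and cross-terms from simplexes touching several axes), and handling the signs in \eqref{volsimplex} consistently so that the reduced-simplex volumes come out with the correct orientation.
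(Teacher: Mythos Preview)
Your proposal is correct and follows essentially the same route as the paper: expand $\nu(f'_d)$ over the simplexes of $\Xi'_I$, identify which simplexes contribute to the coefficient of $\alpha_{i_0}$ via the axis-vertex/edge condition, factor out $\alpha_{i_0}$ by passing to reduced simplexes, and then observe that simplexes in $\Xi'_{I,J,i_0}$ avoid all the new axis vertices $v_{i_j}$ and hence already lie in $\Xi_I$.

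One point deserves adjustment. Your justification for why only simplexes in $\Xi'_{I,J,i_0}$ contribute to $P_{i_0}$ invokes the ordering $\alpha_1\ll\cdots\ll\alpha_d\ll\alpha_{i_1}\ll\cdots\ll\alpha_{i_p}$, and you flag this as the main obstacle. In fact the ordering is irrelevant here: the $P_i$'s are \emph{polynomial coefficients}, extracted after one knows (via Lemma~\ref{lemma-ul}) that $\mu_{f_d}(0)=\nu(f'_d)$ holds as an identity of polynomials. The paper's argument is cleaner: a simplex touching the $i_0$th axis and also some $i_j$th axis (for $i_j\in\{i_1,\ldots,i_p\}$) would contribute a term of the form $\alpha_{i_0}\alpha_{i_j}\cdot c$ to $\nu(f'_d)$, hence a term $\alpha_{i_j}\cdot c$ to $P_{i_0}(\alpha_{i_1},\ldots,\alpha_{i_p})$; but it has already been established, by comparing \eqref{lpmn} and \eqref{lpnn}, that $P_{i_0}$ is a constant, so such contributions must cancel and $P_{i_0}$ equals its constant part, namely $\widetilde{\nu}_{\Xi',J,i_0}(f'_d)$. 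Replace your ordering heuristic by this use of the constancy of $P_{i_0}$ and the ``obstacle'' disappears; the rest of your argument goes through as written.
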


To complete the proof of Theorem \ref{mt}, it remains to prove Lemmas \ref{lemma-nm} and \ref{fl}. We start with the proof of Lemma \ref{fl}.

\begin{proof}[Proof of Lemma \ref{fl}]
By \eqref{KNN} and \eqref{rel-vss}, the Newton number $\nu(f'_d)$ is (up to coefficients of the form $(-1)^{n-|I|}\, |I|!$) a sum of volumes of the form $\Vol_{|I|}({S}'_{I,r})$, where $\emptyset\not=I \subseteq \{1,\ldots,n\}$ and ${S}'_{I,r}$ is a simplex of ${\Xi}'_I$ with maximal dimension $|I|$, plus the number 
\begin{equation*}
(-1)^{n-|\emptyset|} |\emptyset|!\, \mbox{Vol}_{|\emptyset|}(\Gamma_{\!-}(f_d')^\emptyset)=(-1)^n,
\end{equation*}
which corresponds to $I=\emptyset$ in the definition of $\nu(f'_d)$ (see Section \ref{sect-knn}). If for any $1\leq i_0\leq d$ the matrix used to compute the volume $\Vol_{|I|}({S}'_{I,r})$ (see~\eqref{volsimplex}) does not have any column of the form
\begin{equation*}
\left(\begin{matrix}
\beta_1 & \cdots & \beta_{i_0-1} & \alpha_{i_0}& \beta_{i_0+1} & \cdots & \beta_{|I|} & 1
\end{matrix}
\right)^T,
\end{equation*}
then $\Vol_{|I|}({S}'_{I,r})$ contributes to the term $P_0$ which appears in \eqref{lpnn}. (Here, the letter ``$T$'' stands for the transposed matrix.) On the other hand, if it contains such a column for some $i_0\in\{1,\ldots,d\}$, then necessarily the $\beta_i$'s are zero, and the column is of the form
\begin{equation*}
C_{i_0}:=
\left(\begin{matrix}
0 & \cdots & 0 & \alpha_{i_0}& 0 & \cdots & 0 & 1
\end{matrix}
\right)^T
\end{equation*}
(because $\Gamma(f_d')$ intersects the $i_0$th coordinate axis of $\mathbb{R}^n_+$ precisely at the point $(0,\ldots,0,\alpha_{i_0},0,\ldots,0)$ by Lemma \ref{lemma-nm}). If the matrix has two columns $C_{i_0}$ and $C_{i_0'}$ of the above form, with $i_0,i_0'\in\{1,\ldots,d\}$ and $i_0\not=i_0'$, then $\Vol_{|I|}({S}'_{I,r})$ is not involved in the linear part \eqref{lpnn} of $\nu(f_d')$.
Now, if it has one column $C_{i_0}$ for some $i_0\in\{1,\ldots,d\}$ and no any other column $C_{i_0'}$ for $i_0'\in\{1,\ldots,d\}\setminus \{i_0\}$, then $\Vol_{|I|}({S}'_{I,r})$ contributes to the term $\alpha_{i_0}P_{i_0}$ which appears in \eqref{lpnn}. Note that in the latter case, the matrix cannot have any column of the form $C_{i_j}$ with $i_j\in\{i_1,\ldots, i_p\}$ (as otherwise the constant polynomial $P_{i_0}$ would depend on $\alpha_{i_j}$). Altogether, for any $1\leq i_0\leq d$, the volume $\Vol_{|I|}({S}'_{I,r})$ contributes to the term $\alpha_{i_0}P_{i_0}$ if and only if the corresponding matrix has a column of the form $C_{i_0}$ and no column of the form $C_{i_0'}$ for any other $i_0'\in\{1,\ldots,d,i_1,\ldots, i_p\}\setminus \{i_0\}=J\setminus \{i_0\}$. In other words, $\Vol_{|I|}({S}'_{I,r})$ contributes to the term $\alpha_{i_0}P_{i_0}$ if and only if ${S}'_{I,r}\in \Xi'_{I,J,i_0}$. Thus,
\begin{align*}
\alpha_{i_0}P_{i_0} & = \sum _{I \subseteq \{1,\ldots,n\},\, I\ni i_0} 
\Bigg(\sum_{{S}'_{I,r}\in {\Xi}'_{I,J,i_0}} 
(-1)^{n-|I|} |I|! \Vol _{|I|} ({S}'_{I,r})\Bigg)\\
 & = \sum _{I \subseteq \{1,\ldots,n\},\, I\ni i_0} \alpha_{i_0}
\Bigg(\sum_{\widetilde{S}'_{I,r}\in \widetilde{\Xi}'_{I,J,i_0}} (-1)^{n-|I|} |I|! \Vol _{|I|} (\widetilde{S}'_{I,r})\Bigg)\\
 & = \alpha_{i_0} \widetilde{\nu}_{\Xi',J,i_0}(f'_d),
\end{align*}
where $\widetilde{S}'_{I,r}$ denotes the reduced simplex associated to $S'_{I,r}$ (see Section \ref{sect-rnn}).
Since $\Gamma(f_d')$ is obtained from $\Gamma(f_d)$ only by ``adding'' the vertices \begin{align*}
v_{i_j}:=(0,\ldots,0,\alpha_{i_j},0,\ldots,0)
\end{align*}
(with $\alpha_{i_j}$ at the $i_j$th place) for large $\alpha_{i_j}$ ($1\leq j\leq p$), if a simplex ${S}'_{I,r}$ of ${\Xi}'_I$ with maximal dimension is not a simplex of ${\Xi}_I$ (in particular this is the case if $\Xi_I=\emptyset$), then necessarily it intersects the $i_j$th coordinate axis of $\mathbb{R}^n _+$ for some $j$ ($1\leq j\leq p$). It follows that 
\begin{align*}
\Xi_{I,\{1,\ldots,d\},i_0}=\Xi'_{I,J,i_0},
\end{align*}
and hence, 
\begin{align*}
\widetilde{\nu}_{\Xi,i_0}(f_d):=\widetilde{\nu}_{\Xi,\{1,\ldots,d\},i_0}(f_d)=\widetilde{\nu}_{\Xi',J,i_0}(f'_d)=P_{i_0}.
\end{align*}
Since the choice of $\Xi$ is arbitrary and $P_{i_0}$ is a constant independent of $\Xi$, it follows that the modified Newton number $\widetilde{\nu}_{\Xi,i_0}(f_d)$ is also independent of $\Xi$. The notation $\widetilde{\nu}_{i_0}(f_d):=\widetilde{\nu}_{\Xi,i_0}(f_d)$ is therefore quite relevant.

Since the volume $\Vol_{|I|}({S}'_{I,r})$ contributes to the term $P_0$ if and only if the simplex ${S}'_{I,r}$ belongs to $\Xi'_{I,J,0}$ (we recall that $P_0$ is constant, independent of $\alpha_{i_1},\ldots,\alpha_{i_p}$), a similar argument shows that 
\begin{displaymath}
\nu_{0}(f_d)+(-1)^n=\nu_{\Xi',J,0}(f'_d)+(-1)^n=P_{0}. \qedhere
\end{displaymath}
\end{proof}

Now we prove Lemma \ref{lemma-nm}.

\begin{proof}[Proof of Lemma \ref{lemma-nm}]
We argue by contradiction. Suppose that $f$ has a term of the form $c_i z_i^{a_i}$ for some $i$ ($1\leq i\leq d$). To simplify, without loss of generality, we may assume $i=1$ (the other cases are similar). By the Iomdine-L\^e-Massey formula again and by Lemmas 3.7 and 3.8 of \cite{BO}, for any $0\ll b_1\ll \cdots\ll b_d$ sufficiently large (in particular so that $a_1<b_1$), the function
\begin{equation*}
g(z) := f(z)+z_1^{b_1}+ \cdots + z_d^{b_d}
\end{equation*}
is non-degenerate and has an isolated singularity at $0$. Then, by \cite[Corol\-lary~2.9]{BO}, its support (denoted by $\supp g$) satisfies so-called \emph{Kouchnirenko condition} (see \cite{K2} or Section~2 of \cite{BO} for the definition; see also Section 3 of \cite{O} and the references mentioned therein for equivalent formulations and historical comments).
Now, since $a_1<b_1$, the Newton diagrams of $g$ and of the function
\begin{equation*}
g'(z):=g(z)-z_1^{b_1}
\end{equation*}
coincide. It follows that $g'$ is also non-degenerate and such that its support $\supp g'$  satisfies the Kouchnirenko condition. Theorem 3.1 of \cite{BO} then implies that $g'$ has an isolated singularity at $0$. 
If $d:=\dim_0\Sigma f=1$, then this is already a contradiction, because in this case $g'=f$. If $d>1$, then define
\begin{align*}
Z_{g'} & := \left \{ z \in \cc^n \, ;\, \frac{\partial g'}{\partial z_i}(z)=0 \mbox{ for all } i\in\{1,d+1,\ldots,n\}\right \}.
\end{align*}
Define $Z_f$ similarly (replacing $g'$ by $f$).
Clearly, $Z_{g'}=Z_f$. Therefore, we have $d:=\dim_0 \Sigma f \leq \dim_0 Z_{f}=\dim_0 Z_{g'}$, but since $g'$ has an isolated singularity at~$0$, we must also have $\dim_0 Z_{g'}=d-1$, a new contradiction.
\end{proof}

\section{Proof of Corollary \ref{cor2}}\label{pcor2}

First, we show that $\dim_0 \Sigma f =\dim_0 \Sigma g$. The argument is similar to that given in the proof of Lemma \ref{lemma-nm}.
We argue by contradiction. Put $d:=\dim_0 \Sigma f $ and $s:=\dim_0 \Sigma g$, and suppose for instance $d<s$. By the Iomdine-L\^e-Massey formula  and by Lemmas 3.7 and 3.8 of \cite{BO}, for any integers $0\ll \alpha_1\ll\cdots\ll\alpha_d$ sufficiently large\footnote{Precisely, $\alpha_p>\max\{2,\rho_{f_{p-1},z^{(p-1)}}(0),\rho_{g_{p-1},z^{(p-1)}}(0),m(f_{p-1}),m(g_{p-1})\}$.}, the functions 
\begin{equation*}
f_d(z):=f(z)+z_1^{\alpha_1}+\cdots+z_d^{\alpha_d}
\quad\mbox{and}\quad
g_d(z):=g(z)+z_1^{\alpha_1}+\cdots+z_d^{\alpha_d}
\end{equation*}
are non-degenerate, $f_d$ has an isolated singularity at $0$, and $\dim_0\Sigma g_d=s-d>0$. Then, by \cite[Corollary 2.9]{BO}, the support of $f_d$ satisfies the Kouchnirenko condition. Since $f$ and $g$ have the same Newton diagram, it follows that $\Gamma(f_d)=\Gamma(g_d)$ too. Thus the support of $g_d$ also satisfies the Kouchnirenko condition, and by [1, Theorem 3.1], the function $g_d$ must have an isolated singularity at $0$ --- a contradiction.

Now, to show that the L\^e numbers of $f$ and $g$ at $0$ with respect to the coordinates $z=(z_1,\ldots,z_n)$ are equal, we apply Theorem \ref{mt}. By this theorem, these L\^e numbers are described in terms of the modified (and special modified) Newton numbers of the functions $f_d$ and $g_d$. Then the result follows immediately from the equality $\Gamma(f_d)=\Gamma(g_d)$.

\appendix
\section{}\label{ppmt}

For completeness, in this appendix, we give a proof of a useful elementary lemma which we have used in the proof of Theorem \ref{mt}.

Let $d$ be a positive integer. Consider the following system $\mathscr{S}$ of $d$ integral inequalities with $d$ variables $\alpha_1,\ldots,\alpha_d$:
\begin{equation*}\label{system}
\begin{cases}
\alpha_1 \geq c_1,\\
\alpha_2 \geq c_2(\alpha_1),\\
\alpha_3 \geq c_3(\alpha_1,\alpha_2),\\
\cdots \\
\alpha_d \geq c_d(\alpha_1,\ldots,\alpha_{d-1}).
\end{cases}
\end{equation*}
Here, $c_1$ is a constant, and for $2\leq \ell\leq d$, $c_\ell(\alpha_1,\ldots,\alpha_{\ell-1})$ is a number depending on $\alpha_1,\ldots,\alpha_{\ell-1}$. For each $1\leq r\leq d$, let $\mathscr{S}(r)$ be the system consisting only of the first $r$ inequalities of the system $\mathscr{S}$. Finally, let $Z(r) \subseteq \zz^{r}$ be the set of (integral) solutions of the system $\mathscr{S}(r)$.

\begin{lem}\label{lemma-ul}
For any $1\leq r\leq d$, if $P(x_1,\ldots,x_r)$ is a polynomial function that vanishes on $Z(r)$, then it is identically~zero. 
\end{lem}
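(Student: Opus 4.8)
The plan is to prove the statement by induction on $r$. The base case $r=1$ is the familiar fact that a one-variable polynomial with infinitely many roots is the zero polynomial: since $Z(1)=\{\alpha_1\in\zz \, ;\, \alpha_1\geq c_1\}$ is an infinite set, any polynomial $P(x_1)$ vanishing on it has infinitely many roots, hence $P\equiv 0$.

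For the inductive step, suppose the claim holds for $r-1$ and let $P(x_1,\ldots,x_r)$ vanish on $Z(r)$. First I would fix an arbitrary point $(\alpha_1,\ldots,\alpha_{r-1})\in Z(r-1)$. The key observation is that for this fixed tuple, the ``slice'' of $Z(r)$ lying above it, namely $\{\alpha_r\in\zz \, ;\, \alpha_r\geq c_r(\alpha_1,\ldots,\alpha_{r-1})\}$, is an infinite subset of $\zz$; this is exactly where we use the structure of the system $\mathscr{S}$, i.e.\ that the last inequality only bounds $\alpha_r$ from below in terms of the previous variables. Hence the one-variable polynomial $x_r\mapsto P(\alpha_1,\ldots,\alpha_{r-1},x_r)$ vanishes on an infinite set, so it is identically zero. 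Writing $P(x_1,\ldots,x_r)=\sum_{j} P_j(x_1,\ldots,x_{r-1})\, x_r^{j}$, this says that every coefficient polynomial $P_j$ vanishes at $(\alpha_1,\ldots,\alpha_{r-1})$. Since $(\alpha_1,\ldots,\alpha_{r-1})$ was an arbitrary point of $Z(r-1)$, each $P_j$ vanishes on all of $Z(r-1)$.

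Now I would invoke the induction hypothesis: each $P_j(x_1,\ldots,x_{r-1})$ vanishes on $Z(r-1)$, hence $P_j\equiv 0$ for every $j$. Therefore $P=\sum_j P_j\, x_r^j\equiv 0$, completing the induction and the proof.

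The only point that requires a little care — and which I would flag as the ``main obstacle'' though it is quite mild — is making sure that $Z(r-1)$ is non-empty (so that the inductive hypothesis has content and the argument above is not vacuous), and, more importantly, that each individual fibre over a point of $Z(r-1)$ is infinite. Both are immediate from the triangular, lower-bound-only shape of the system: one builds a solution greedily, choosing $\alpha_1\geq c_1$, then $\alpha_2\geq c_2(\alpha_1)$, and so on, and at the last step there are infinitely many admissible integers $\alpha_r$. No genuine difficulty arises; the whole lemma is an exercise in the one-variable vanishing principle applied variable by variable.
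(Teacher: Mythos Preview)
Your proof is correct and follows essentially the same approach as the paper's own proof: induction on $r$, with the base case handled by the one-variable vanishing principle, and the inductive step obtained by expanding $P$ in powers of $x_r$, freezing $(\alpha_1,\ldots,\alpha_{r-1})\in Z(r-1)$, using the infinite fibre to kill the one-variable polynomial in $x_r$, and then applying the induction hypothesis to each coefficient $P_j$. Your additional remark about non-emptiness of $Z(r-1)$ and the infiniteness of the fibres is a welcome clarification, but otherwise there is nothing to add.
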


\begin{proof}
By induction on $r$. For $r=1$, the lemma immediately follows from the fundamental theorem of algebra. Now suppose the lemma holds true for some integer $r-1$ (with $r\geq 2$), and let us show that it also holds true for the integer $r$. So, let $P(x_1,\ldots,x_r)$ be a polynomial function such that $P(\alpha_1,\ldots,\alpha_r)=0$ for any $(\alpha_1,\ldots,\alpha_r) \in Z(r)$. Note that $(\alpha_1,\ldots,\alpha_r) \in Z(r)$ implies $(\alpha_1,\ldots,\alpha_{r-1}) \in Z(r-1)$. Expand $P$ with respect to the variable $x_r$:
\begin{equation*}
P(x_1,\ldots,x_r)=\sum _{k=0}^{\delta} P_k(x_1,\ldots,x_{r-1})\, x_r^k.
\end{equation*}
(Here, $\delta$ denotes the degree of $P$.)
Then for all $(\alpha_1,\ldots,\alpha_r) \in Z(r)$,
\begin{equation*}
\sum _{k=0}^\delta P_k(\alpha_1,\ldots,\alpha_{r-1})\, \alpha_r^k=0.
\end{equation*}
By the fundamental theorem of algebra, 
it follows that for each $0\leq k\leq \delta$, 
\begin{equation*}
P_k(\alpha_1,\ldots,\alpha_{r-1})=0 
\end{equation*}
for every fixed $(\alpha_1,\ldots,\alpha_{r-1})\in Z(r-1)$. Now, by the induction hypothesis, this implies that the polynomial $P_k$ identically vanishes.
\end{proof}

\end{document}